\begin{document}

\newtheorem{thm}{Theorem}[section]
\newtheorem{theorem}{Theorem}[section]
\newtheorem{lem}[thm]{Lemma}
\newtheorem{lemma}[thm]{Lemma}
\newtheorem{prop}[thm]{Proposition}
\newtheorem{proposition}[thm]{Proposition}
\newtheorem{cor}[thm]{Corollary}
\newtheorem{defn}[thm]{Definition}
\newtheorem*{remark}{Remark}
\newtheorem{conj}[thm]{Conjecture}

\numberwithin{equation}{section}

\newcommand{\Z}{{\mathbb Z}} 
\newcommand{\Q}{{\mathbb Q}}
\newcommand{\R}{{\mathbb R}}
\newcommand{\C}{{\mathbb C}}
\newcommand{\N}{{\mathbb N}}
\newcommand{\FF}{{\mathbb F}}
\newcommand{\fq}{\mathbb{F}_q}

\newcommand{\rmk}[1]{\footnote{{\bf Comment:} #1}}

\renewcommand{\mod}{\;\operatorname{mod}}
\newcommand{\ord}{\operatorname{ord}}
\newcommand{\TT}{\mathbb{T}}
\renewcommand{\i}{{\mathrm{i}}}
\renewcommand{\d}{{\mathrm{d}}}
\renewcommand{\^}{\widehat}
\newcommand{\HH}{\mathbb H}
\newcommand{\Vol}{\operatorname{vol}}
\newcommand{\area}{\operatorname{area}}
\newcommand{\tr}{\operatorname{tr}}
\newcommand{\norm}{\mathcal N} 
\newcommand{\intinf}{\int_{-\infty}^\infty}
\newcommand{\ave}[1]{\left\langle#1\right\rangle} 
\newcommand{\Var}{\operatorname{Var}}
\newcommand{\Prob}{\operatorname{Prob}}
\newcommand{\sym}{\operatorname{Sym}}
\newcommand{\disc}{\operatorname{disc}}
\newcommand{\CA}{{\mathcal C}_A}
\newcommand{\cond}{\operatorname{cond}} 
\newcommand{\lcm}{\operatorname{lcm}}
\newcommand{\Kl}{\operatorname{Kl}} 
\newcommand{\leg}[2]{\left( \frac{#1}{#2} \right)}  
\newcommand{\Li}{\operatorname{Li}}

\newcommand{\sumstar}{\sideset \and^{*} \to \sum}

\newcommand{\LL}{\mathcal L} 
\newcommand{\sumf}{\sum^\flat}
\newcommand{\Hgev}{\mathcal H_{2g+2,q}}
\newcommand{\USp}{\operatorname{USp}}
\newcommand{\conv}{*}
\newcommand{\dist} {\operatorname{dist}}
\newcommand{\CF}{c_0} 
\newcommand{\kerp}{\mathcal K}

\title[variance of the number of prime polynomials]{The variance of the number of prime polynomials in short intervals and in residue classes}
\author{J.P. Keating and Z. Rudnick}

\address{School of Mathematics, University of Bristol, Bristol BS8 1TW, UK}
\email{j.p.keating@bristol.ac.uk}

\address{Raymond and Beverly Sackler School of Mathematical Sciences,
Tel Aviv University, Tel Aviv 69978, Israel}
\email{rudnick@post.tau.ac.il}
\date{\today}
\thanks{JPK was supported by a grant from the Leverhulme Trust and by the Air Force Office of Scientific Research, Air Force Material Command, USAF, under grant number FA8655-10-1-3088. The U.S. Government is authorized to reproduce and distribute reprints for Governmental purpose notwithstanding any copyright notation thereon.  ZR was supported by   the Israel Science Foundation (grant No.
1083/10).}

\begin{abstract}
We resolve a function field version of two conjectures  concerning
the variance of the number of primes in short intervals (Goldston
and Montgomery) and in arithmetic progressions (Hooley).
 A crucial ingredient in our work are recent equidistribution results of N. Katz.
\end{abstract}
\maketitle

\numberwithin{equation}{section}

\section{Introduction}

In this note we study a function field version of two outstanding
problems in classical Prime Number Theory, concerning the variance
of the number of primes in short intervals and in arithmetic
progressions.

\subsection{Problem 1: Primes in short intervals}
The Prime Number Theorem (PNT) asserts that the number $\pi(x)$ of
primes up to $x$ is asymptotically $\Li(x) = \int_2^x \frac{dt}{\log
  t}$. Equivalently, defining the von Mangoldt function as
$\Lambda(n)=\log p$ if $n=p^k$ is a prime power, and $0$ otherwise,
then PNT is equivalent to the assertion that
\begin{equation}
\psi(x):=\sum_{n\leq x}\Lambda(n) \sim x \quad \mbox{ as }x\to
\infty \;.
\end{equation}

To study  the distribution of primes in short intervals, we define
for  $1\leq H\leq x$,
\begin{equation}
 \psi(x;H):=   \sum_{n\in[x-\frac H2,x+\frac H2]} \Lambda(n) \;.
\end{equation}
The Riemann Hypothesis guarantees an asymptotic formula
$\psi(X;H)\sim H$ as long as $H>X^{\frac 12 +o(1)}$. To understand
the behavior in shorter intervals, Goldston and Montgomery
\cite{Goldston-Montgomery} studied the variance of $\psi(x;H)$ and
showed conditionally that for $X^\delta<H<X^{1-\delta}$,
\begin{equation}\label{MS result}
\frac 1X\int_2^X \left|\psi(x;H)- H\right|^2 dx \sim H(\log X-\log
H)\;,
\end{equation}
assuming the Riemann Hypothesis and  the ("strong") pair correlation
conjecture.  Furthermore, they showed that under RH \eqref{MS
result} and the strong pair correlation conjecture are in fact
equivalent. At this time \eqref{MS result} is still open.

\subsection{Problem 2: Primes in arithmetic progressions}
The Prime Number Theorem for arithmetic progression states that for
a modulus $Q$ and $A$ coprime to $Q$, the number of primes $p\leq X$
with $p=A\bmod Q$ is asymptotically $\pi(x)/\phi(Q)$, where $\pi(X)$
is the number of primes up to $X$ and $\phi(Q)$ is the Euler totient
function, giving the number of reduced residues modulo $Q$.
Equivalently, if
\begin{equation}
  \psi(X;Q,A) := \sum_{\substack{n\leq X\\ n=A\bmod Q}} \Lambda(n)
\end{equation}
then PNT for arithmetic progressions states that for a fixed modulus
$Q$,
\begin{equation}\label{PNT for arith prog}
   \psi(X;Q,A)\sim \frac{X}{\phi(Q)},\quad \mbox{ as }X\to \infty
   \;.
\end{equation}

In most arithmetic applications it is crucial to allow the modulus
to grow with $X$. Thus the remainder term in \eqref{PNT for arith
prog} is of the essence. For very large moduli $Q>X$, there can be
at most one prime in the arithmetic progression $P=A \mod Q$ so that
the interesting range is $Q<X$. Assuming the Generalized Riemann
Hypothesis (GRH) gives \eqref{PNT for arith prog} for
$Q<X^{1/2-o(1)}$.

The fluctuations of $\psi(X;Q,A)$ have been studied over several
decades, notably allowing also averaging over the modulus $Q$. Thus
define
\begin{equation}
  G(X,Q)=\sum_{\substack{A\bmod Q\\ \gcd(A,Q)=1}} \left| \psi(X;Q,A)-\frac X{\phi(Q)}
  \right|^2
\end{equation}
and
\begin{equation}
  H(X,Q ) = \sum_{Q' \leq Q} G(X,Q\, ') \;.
\end{equation}
The study of the sum $H(X,Q)$ has a long history, going under the
name of theorems of Barban-Davenport-Halberstam type. Among other
results is the one due to Montgomery \cite{Montgomery} and Hooley
\cite{HooleyI} asserting that for $X/(\log X)^A<Q<X$ one has
\begin{equation}\label{BDH thm}
  H(X,Q) = QX\log Q -cQX+O(Q^{5/4}X^{3/4} + \frac{X^2}{(\log X)^A})\;,
\end{equation}
for all $A>0$, where
\begin{equation}
   c=\gamma + \log(2\pi) +1  + \sum_p \frac{\log p}{p(p-1)} \;.
\end{equation}
Hooley \cite{HooleyII} showed  that assuming GRH, \eqref{BDH thm}
holds for $X^{1/2+\epsilon}<Q<X$ with remainder $O(X^2/(\log X)^A)$.

The individual variance $G(X,Q)$ is much less understood. Hooley
\cite{HooleyICM} conjectured that under some (unspecified)
conditions,
\begin{equation}\label{Hooley conj}
  G(X,Q) \sim X\log Q \;.
\end{equation}
Friedlander and Goldston \cite{FG} show that in the
range $Q>X$, 
\begin{equation}\label{FG uninteresting}
  G(X,Q)= X\log X - X - \frac{X^2}{\phi(Q)} + O(\frac X{(\log
    X)^A}) + O((\log Q)^3) \;.
\end{equation}
Note that in this range, there is at most one integer $n=A\mod Q$
with $n<X$. They conjecture that \eqref{Hooley conj} holds if
\begin{equation}\label{FG range}
  X^{1/2+\epsilon}<Q<X
\end{equation}
and further conjecture that if $X^{1/2+\epsilon}<Q<X^{1-\epsilon}$
then
\begin{equation}\label{FG conj}
  G(X,Q)= X\log Q -X(\gamma +\log 2\pi +
\sum_{p\mid Q} \frac{\log p}{p-1} )+o(X) \;.
\end{equation}
They show  that both \eqref{Hooley conj} (in the range
$X^{1/2+\epsilon}<Q<X$) and \eqref{FG conj} (in the range
$X^{1/2+\epsilon}<Q<X^{1-\epsilon}$) hold assuming a
Hardy-Littlewood conjecture with small remainders.

For $Q<X^{1/2}$ very little seems to be known. Hooley addresses this
in  paper V of his series of papers on the subject \cite{HooleyV},
which he opens by stating
\begin{quotation}
{\sl An interesting anomaly in the theory of primes is presented by
the situation in which known forms of the prime number theorem for
arithmetic progressions are only valid for (relatively) small values
of the common difference\footnote{Hooley's $k$ corresponds to $Q$
and $x$ to $X$} $k$, whereas the theorems of
Barban-Davenport-Halberstam type discussed \footnote{Here he is
referring to the earlier papers in the series} in I, II, IV are only
fully significant for the (relatively) larger values of $k$. The
most striking illustration of this contrast is perhaps provided by
the conditional theorems at present available on the extended
Riemann hypothesis, the ranges of significance of the prime number
theorem and of the Barban-Montgomery theorem given in II being then,
respectively, $k < x^{1/2-\epsilon}$ and $k > x^{1/2+\epsilon}$.

\dots it is therefore certainly desirable to elicit further forms of
the Barban-Davenport-Halberstam theorem that should be valid for the
smaller values of $k$. }
\end{quotation}

Concerning Conjectures \eqref{Hooley conj} and \eqref{FG conj} for
$G(X,Q)$, Friedlander and Goldston  say \cite[page 315]{FG}
\begin{quotation}
\sl It may well be that these also hold for smaller $Q$, but below
  $X=Q^{1/2}$ we are somewhat skeptical.
\end{quotation}

In this paper we  resolve the function-field versions of
Conjectures~\eqref{MS result} and  \eqref{Hooley conj}, indicating
that \eqref{Hooley conj} should hold all the way down to
$Q>X^\epsilon$.  A crucial ingredient in our work are recent
equidistribution results of Katz \cite{KatzKR, KatzKR2} described in
\S~\ref{secGM}, \S~\ref{secFGH}.

\section{Results for function fields}

Let $\fq$ be a finite field of $q$ elements and $\fq[T]$ the ring of
polynomials with coefficients in $\fq$. Let $\mathcal P_n=\{f\in
\fq[T]:\deg f=n\}$ be the set of polynomials of degree $n$ and
$\mathcal M_n\subset \mathcal P_n$ the subset of monic polynomials.

The von Mangoldt function in this case is defined as
$\Lambda(N)=\deg P$, if $N=cP^k$  with $P$ an irreducible  monic
polynomial, and $c\in \fq^\times$, and $\Lambda(N)=0$ otherwise. The
Prime Polynomial Theorem in this context is the identity
\begin{equation}\label{Explicit formula}
 \sum_{f\in \mathcal M_n}\Lambda(f) = q^n \;.
\end{equation}
\subsection{Short intervals}

For $A\in \mathcal P_n$ of degree $n$, and $h<n$, we define ``short
intervals''
\begin{equation}
I(A;h):=\{f: ||f-A||\leq q^h \} = A+  \mathcal P_{\leq h}\;,
\end{equation}
where the norm of a polynomial $0\neq f\in \fq[T]$ is
\begin{equation}
||f||:=q^{\deg f}
\end{equation}
and
\begin{equation}
  \mathcal P_{\leq h} = \{0\} \cup \bigcup _{0\leq m\leq h}\mathcal P_m
\end{equation}
is the space of polynomials of degree at most $h$ (including $0$).
We have
\begin{equation}
\#I(A;h) = q^{h+1}\;.
\end{equation}

Note: For $h<n$, if $||f-A||\leq q^h$ then $A$ monic if and only if
$f$ is monic. Hence for $A$ monic, $I(A;h)$ consists of only monic
polynomials and all monic $f$'s of degree $n$ are contained in one
of the intervals $I(A;h)$ with $A$ monic.


We define for   $1\leq h< n$ and $A\in \mathcal P_n$,
\begin{equation}
  \nu(A;h) = \sum_{\substack{f\in I(A;h)\\ f(0)\neq 0}} \Lambda(f)
\end{equation}
to be the number of prime powers co-prime to $T$ in the interval
$I(A;h)$, weighted by the degree of the corresponding prime.

We will show in Lemma~\ref{lem:mean value}  that the mean value of
$\nu(A;h)$  when we average over monic $A\in \mathcal M_n$ is
\begin{equation}\label{mean}
\ave{\nu(\bullet;h)}:= \frac 1{q^n} \sum_{A\in \mathcal M_n}
\nu(A;h) =q^{h+1}(1-\frac{1}{q^n})\;.
\end{equation}
 Our goal is to compute the variance
 $$\Var\nu(\bullet;h) = \frac 1{q^n}\sum_{A\in \mathcal M_n} |\nu(A;h)-\ave{\nu(\bullet;h)}|^2$$
 in the limit $q\to\infty$.

\begin{theorem}\label{thm:GMff}
Let $h<n-3$. Then
\begin{equation}\label{GMff}
\lim_{q\to \infty} \frac 1{  q^{h+1}} \Var(\nu(\bullet;h))= n-h-2\;.
\end{equation}
\end{theorem}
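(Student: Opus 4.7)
The plan is to convert the short-interval variance into an average of squared character sums via a reciprocal involution, rewrite each sum as a trace of a unitary matrix using Weil's explicit formula, and then apply the large-$q$ equidistribution theorem of Katz together with a classical matrix integral.

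I would first exploit the involution $f \mapsto f^*$, $f^*(T):=T^n f(1/T)$. If $A\in\mathcal{M}_n$ and $m := n-h$, a direct check gives $f\in I(A;h)$ with $f(0)\neq 0$ iff $\deg f^* = n$ and $f^*\equiv A^*\pmod{T^m}$, and moreover $\Lambda(f^*)=\Lambda(f)$. Hence $\nu(A;h)$ descends to a function on the subgroup $G\subset (\fq[T]/T^m)^{\times}$ of residues with constant term $1$ (of order $|G|=q^{m-1}$), and each element of $G$ is realized by $q^{h+1}$ monic polynomials $A\in\mathcal{M}_n$. Plancherel on $G$, whose dual is canonically the group of even Dirichlet characters mod $T^m$ (those trivial on $\fq^\times$), yields
\begin{equation*}
    \Var \nu(\bullet;h) \;=\; \frac{1}{q^{2(m-1)}} \sum_{\substack{\chi\text{ even}\bmod T^m\\ \chi\neq \chi_0}} |\Psi_n(\chi)|^2, \qquad \Psi_n(\chi) := \sum_{f\in\mathcal{M}_n}\chi(f^*)\Lambda(f).
\end{equation*}

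Next, for a primitive even character $\chi$ of conductor $T^m$, the $L$-function $L(u,\chi)$ is a polynomial of degree $N := m-2$ whose inverse roots all have modulus $\sqrt q$ by Weil; writing $L(u,\chi)=\det(I-\sqrt q\,u\,\Theta_\chi)$ with $\Theta_\chi\in U(N)$, the explicit formula gives $\Psi_n(\chi) = -q^{n/2}\tr(\Theta_\chi^n)$, hence $|\Psi_n(\chi)|^2 = q^n|\tr(\Theta_\chi^n)|^2$. The number of non-primitive even characters is $O(q^{m-2})$ and each contributes at most $O(q^n)$, so their total contribution to the variance is $O(q^h)$, negligible compared with the target order $q^{h+1}$. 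I would then invoke Katz's equidistribution theorem \cite{KatzKR,KatzKR2}: as $q\to\infty$ with $m$ held fixed, the conjugacy classes $\{\Theta_\chi\}$, for $\chi$ ranging over primitive even characters mod $T^m$, equidistribute in $\operatorname{PU}(N)$ with respect to Haar measure. The function $|\tr g^n|^2$ descends to $\operatorname{PU}(N)$, and by Diaconis--Shahshahani $\int_{U(N)}|\tr g^n|^2\, dg = \min(n,N)$; since $n\geq N+2 > N$ this equals $N = n-h-2$.

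Assembling, with $q^{m-1}-q^{m-2}\sim q^{m-1}$ primitive even characters,
\begin{equation*}
    \Var\nu(\bullet;h)\;\sim\;\frac{1}{q^{2(m-1)}}\cdot q^{m-1}\cdot q^n(n-h-2)\;=\;q^{h+1}(n-h-2),
\end{equation*}
and dividing by $q^{h+1}$ gives the asserted limit. The hypothesis $h<n-3$ is used to guarantee $N \geq 2$, so that Katz's theorem applies with a genuinely non-trivial monodromy group. The main obstacle is the invocation of Katz's theorem itself — the algebraic-geometric identification of the big monodromy group attached to the family of even characters mod $T^m$ — granted which everything else is bookkeeping via character orthogonality, Weil's Riemann hypothesis, and the classical Diaconis--Shahshahani matrix integral.
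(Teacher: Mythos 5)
Your approach is essentially the paper's: reverse polynomials to convert short intervals into residue classes mod $T^{n-h}$, expand in even Dirichlet characters via orthogonality, pass to traces of unitarized Frobenii through Weil's Riemann Hypothesis and the explicit formula, and finish with Katz's equidistribution theorem and the Diaconis--Shahshahani integral $\int_{U(N)}|\tr U^n|^2\,dU=\min(n,N)$. One small inaccuracy to flag: for an even primitive character the trivial zero at $u=1$ gives $\Psi(n,\chi)=-q^{n/2}\tr\Theta_\chi^n-1$, so $|\Psi(n,\chi)|^2=q^n|\tr\Theta_\chi^n|^2+O\bigl((n-h)q^{n/2}\bigr)$ rather than an exact equality; this is harmless for the limit but should be tracked.
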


We may compare \eqref{GMff} with \eqref{MS result} if we make the
dictionary
\begin{equation}
X\leftrightarrow q^n, \quad H\leftrightarrow q^{h+1},\quad \log X
\leftrightarrow n, \quad \log H \leftrightarrow h+1\;,
\end{equation}
the conclusion being that Theorem~\ref{thm:GMff} is precisely the
analogue of the conditional result \eqref{MS result} of Goldston and
Montgomery.

\subsection{Arithmetic progressions}
Our second result concerns the analogue of the conjectures of Hooley
\eqref{Hooley conj} and Friedlander-Goldston \eqref{FG conj} and
allows us to make a definite conjecture in that case.

For a  polynomial $Q\in \fq[T]$ of positive degree, and $A\in
\fq[T]$ coprime to $Q$ and any $n>0$, set
\begin{equation}
  \Psi(n;Q,A)=\sum_{\substack{N\in \mathcal M_n\\N=A\bmod Q}} \Lambda(N)
\end{equation}
(the sum over monic polynomials). The Prime Polynomial Theorem in
arithmetic progressions states that as $n\to \infty$,
\begin{equation}
  \Psi(n;Q,A)\sim \frac{q^n}{\Phi(Q)}
\end{equation}
where $\Phi(Q)$ is the Euler totient function for this context,
namely the number of reduced residue classes modulo $Q$. Now set
\begin{equation}
  G(n;Q)=\sum_{\substack{A\mod Q\\ \gcd(A,Q)=1}} \left|
  \Psi(n;Q,A)-\frac{q^n}{\Phi(Q)} \right|^2 \;.
\end{equation}

We wish to show an analogue of Conjecture \eqref{Hooley conj} in the
limit of large finite field size, that is $q\to \infty$.

\begin{theorem}\label{main thm}

i)  Given a finite field $\fq$, let $Q\in \fq[T]$ be a polynomial of
positive degree, and  $1\leq n<\deg Q$. Then
\begin{equation}
G(n;Q) = nq^n - \frac{q^{2n}}{\Phi(Q)} + O(n^2q^{n/2})+O((\deg Q)^2)
\;,
\end{equation}
the implied constant absolute.

 ii)  Fix $n\geq 2$. Given a sequence
of finite fields $\fq$ and square-free polynomials $Q(T)\in \fq[T]$
of positive degree with $n\geq \deg Q-1$, then as  $ q\to \infty$,
\begin{equation}\label{RMT conseq}
    G(n;Q) \sim q^n(\deg Q-1) \;.
\end{equation}
 \end{theorem}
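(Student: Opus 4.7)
The natural starting point is Dirichlet character orthogonality on $(\fq[T]/Q)^\times$: writing
\begin{equation*}
\Psi(n;Q,A) = \frac{1}{\Phi(Q)}\sum_{\chi \bmod Q}\bar\chi(A)\Psi(n;Q,\chi), \qquad \Psi(n;Q,\chi) := \sum_{N\in\mathcal{M}_n}\Lambda(N)\chi(N),
\end{equation*}
Parseval combined with $\Psi(n;Q,\chi_0) = q^n + O(\deg Q)$ (the error coming from prime powers $P^k \in \mathcal{M}_n$ with $P \mid Q$) yields the platform identity
\begin{equation*}
G(n;Q) = \frac{1}{\Phi(Q)}\sum_\chi|\Psi(n;Q,\chi)|^2 - \frac{q^{2n}}{\Phi(Q)} + O\!\left(\frac{q^n \deg Q}{\Phi(Q)}\right),
\end{equation*}
from which the two parts diverge in how the inner sum is handled.

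For part (i) with $n < \deg Q$, one proceeds \emph{arithmetically}. Expanding $|\Psi(n;Q,\chi)|^2$ as a double sum over $N_1, N_2 \in \mathcal{M}_n$ and applying $\sum_\chi \chi(N_1)\bar\chi(N_2) = \Phi(Q)\cdot\mathbf{1}_{N_1\equiv N_2\,(Q),\,(N_1,Q)=1}$ converts the inner sum into $\sum_{N_1\equiv N_2\,(\bmod\,Q),\,(N_1,Q)=1}\Lambda(N_1)\Lambda(N_2)$. The hypothesis $n < \deg Q$ forces $N_1 = N_2$, collapsing the sum to $\sum_{N \in \mathcal{M}_n,\,(N,Q)=1}\Lambda(N)^2$. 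Splitting by the exponent $k$ in $N = P^k$ and invoking \eqref{Explicit formula} with the standard bound $\pi_n = q^n/n + O(q^{n/2}/n)$ produces $nq^n + O(nq^{n/2}) + O(n\deg Q)$. Combined with the platform identity, this yields (i); the remainder is pure bookkeeping of error terms.

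For part (ii) with $n \ge \deg Q - 1$, the congruence no longer forces $N_1 = N_2$ and one must pass to the \emph{spectral} description. For $\chi$ primitive modulo the square-free $Q$, Weil's theorem factors
\begin{equation*}
L(u,\chi) = \det(I - u\sqrt{q}\,\Theta_\chi)
\end{equation*}
(up to an explicit $(1-u)$ factor when $\chi$ is \emph{even}), where $\Theta_\chi$ is a unitary matrix of size $N_\chi = \deg Q - 1$ for odd $\chi$ and $N_\chi = \deg Q - 2$ for even $\chi$; the explicit formula then reads $\Psi(n;Q,\chi) = -q^{n/2}\tr\Theta_\chi^n + O(1)$. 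The crucial input is Katz's equidistribution theorem \cite{KatzKR, KatzKR2}: as $q \to \infty$ with $\deg Q$ fixed, the classes $\{\Theta_\chi\}_{\chi \text{ primitive}}$ equidistribute, in each parity, in the full (projective) unitary group $PU(N_\chi)$. Combined with the Weyl integral $\int_{U(N)} |\tr U^n|^2\,dU = \min(n, N)$, which equals $N_\chi$ throughout our range $n \ge N_\chi$, the odd primitive characters contribute $q^n(\deg Q - 1)$ and the even ones $q^n(\deg Q - 2)$. Since even primitive characters comprise only a fraction $1/(q-1) \to 0$ of all characters, and non-primitive non-principal $\chi$ contribute at lower order (the non-trivial part of their $L$-function has strictly smaller degree, the remaining inverse roots having magnitude $1$ rather than $\sqrt q$), only the odd-primitive term survives in the limit, giving $G(n;Q) \sim q^n(\deg Q - 1)$.

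The substantive obstacle lies wholly in invoking Katz's equidistribution theorem, which rests on a delicate computation identifying the geometric monodromy group of the $\ell$-adic sheaf whose Frobenius traces realise $\tr\Theta_\chi^n$ as the full (projective) unitary group in each parity class. Once that deep input is granted, the rest of the argument is a choreography of Parseval, Weil's Riemann Hypothesis, and Weyl integration on compact Lie groups.
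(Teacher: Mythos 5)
Your argument follows the paper's route closely: character orthogonality/Parseval to pass to $\sum_\chi|\Psi(n,\chi)|^2$, collapse to $\sum\Lambda^2$ when $n<\deg Q$, and Katz's equidistribution combined with the Weyl integral $\int_{U(N)}|\tr U^n|^2\,dU=\min(n,N)$ for part (ii); only the odd primitive characters matter in the limit, exactly as in the paper. Two small remarks. For part (i) the paper avoids characters entirely: since $n<\deg Q$ forces each reduced residue class to contain at most one monic $N$ of degree $n$, one has $\Psi(n;Q,A)\in\{\Lambda(A),0\}$ outright, and $G(n;Q)$ squares out directly to $\sum_{\deg A=n,\,(A,Q)=1}\Lambda(A)^2-2\tfrac{q^n}{\Phi(Q)}\sum\Lambda(A)+\tfrac{q^{2n}}{\Phi(Q)}$ — your Parseval detour is correct but unnecessary. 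More substantively, in part (ii) your parenthetical explanation for discarding imprimitive nontrivial characters (that the extra inverse roots have modulus $1$) does not actually give a saving: an imprimitive $\chi$ with conductor of degree $\deg Q-1$ still has $|\Psi(n,\chi)|\asymp q^{n/2}(\deg Q-2)$, so $|\Psi(n,\chi)|^2/q^n$ is not individually small. What makes these characters negligible — just as you correctly argue for the even primitive ones — is their count: there are only $O(\Phi(Q)/q)$ of them, and the paper bounds both even and imprimitive characters by the uniform Weil estimate $|\Psi(n,\chi)|\le(\deg Q-1)q^{n/2}$ and then invokes this count.
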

We can compare \eqref{RMT conseq} to  \eqref{Hooley conj} in the
range \eqref{FG range}, if we make the dictionary
\begin{equation}
  Q\leftrightarrow ||Q||=q^{\deg Q}, \quad \log Q \leftrightarrow \deg
  Q, \quad  X\leftrightarrow q^n,\quad \log X\leftrightarrow n \;.
\end{equation}
The result \eqref{FG uninteresting} in the 
range $Q>X$ corresponds to $n<\deg Q$, and the range
$X^{1/2}<Q<X^{}$ of \eqref{FG range} corresponds to $\deg Q<n<2\deg
Q$, so that we recover the function field version of conjecture
\eqref{Hooley conj}. Note that \eqref{RMT conseq} holds for all $n$,
not just that range. Thus Conjecture \eqref{Hooley conj} may well be
valid for all $Q>X^{\epsilon}$.

\section{Background on characters and L-functions}
\label{sec:Background}

We review some standard background concerning Dirichlet L-functions
for the rational function field, see e.g. \cite{Rosen, Weil}.

\subsection{The Prime Polynomial Theorem}
Let $\fq$ be a finite field of $q$ elements and $\fq[T]$ the
polynomials over $\FF$.
The zeta function $Z(u)$ of $\fq[T]$ is
\begin{equation}\label{def Zeta}
Z(u) :=\prod_{P} (1-u^{\deg P})^{-1}
\end{equation}
where the product is over all monic irreducible polynomials in
$\fq[T]$. The product is absolutely convergent for $|u|<1/q$.

By unique factorization into irreducibles in $\fq[T]$, we have for
$|u|<1/q$,
 \begin{equation}\label{form of Zeta}
Z(u) = \frac 1{1-qu}\;.
\end{equation}
Taking the logarithmic derivative of \eqref{def Zeta} and
\eqref{form of Zeta} leads to the ``Explicit formula''
\begin{equation}\label{Explicit formula2}
\Psi(n):=  \sum_{N\in \mathcal M_n}\Lambda(N) = q^n
\end{equation}
from which we immediately deduce the Prime Polynomial Theorem, for
the number $\pi(n)$ of monic irreducible polynomials of degree $n$:
\begin{equation}
  \pi(n) = \frac {q^n}n +O(q^{n/2}) \;.
\end{equation}

\begin{lemma}\label{variants of PNT}
  \begin{equation}
    \sum_{N\in \mathcal M_n} \Lambda(N)^2 = nq^n +O(n^2q^{n/2})
  \end{equation}
where the  implied constant is absolute (independent of $q$ and
$n$).
\end{lemma}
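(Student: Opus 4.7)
The plan is to expand $\Lambda(N)^2$ using the definition of the von Mangoldt function and reduce the sum to a sum over the prime polynomial counting function $\pi(d)$, which is well controlled by the Prime Polynomial Theorem already recorded in this section.

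More concretely, for a monic $N \in \mathcal{M}_n$, the only way $\Lambda(N) \neq 0$ is $N = P^k$ for some monic irreducible $P$ and some integer $k \geq 1$, in which case $\Lambda(N) = \deg P$. Since $\deg N = k \deg P = n$, the degree $d := \deg P$ must be a divisor of $n$, and for each such $d$ the monic prime powers of degree $n$ are in bijection with the monic irreducibles of degree $d$. I would therefore write
\begin{equation}
\sum_{N \in \mathcal{M}_n} \Lambda(N)^2 \;=\; \sum_{d \mid n} d^2\, \pi(d).
\end{equation}

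Next I would plug in the Prime Polynomial Theorem in the form $d \pi(d) = q^d + O(q^{d/2})$ (which follows from the estimate $\pi(n) = q^n/n + O(q^{n/2})$ stated just above the lemma), giving $d^2 \pi(d) = d q^d + O(d\, q^{d/2})$. The $d = n$ term contributes the desired main term $n q^n$. All the remaining divisors satisfy $d \leq n/2$, so their total contribution to the main sum $\sum_{d \mid n} d q^d$ is bounded by a geometric series
\begin{equation}
\sum_{\substack{d \mid n \\ d \leq n/2}} d\, q^d \;\leq\; n \sum_{d \leq n/2} q^d \;=\; O(n\, q^{n/2}),
\end{equation}
and similarly the error terms $O(d\, q^{d/2})$ sum, over all $d \mid n$ (at most $n$ of them, each at most $n$), to $O(n^2\, q^{n/2})$. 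Collecting everything yields $n q^n + O(n^2 q^{n/2})$, with implied constants absolute.

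There is no real obstacle here; the only minor point to watch is making sure the error bound is uniform in both $q$ and $n$, which is automatic since the number of divisors of $n$ is at most $n$ and the geometric series in $q$ converges with constants independent of $q \geq 2$. In fact the argument gives the slightly sharper $O(n q^{n/2})$, but the statement $O(n^2 q^{n/2})$ suffices for the applications and keeps the bookkeeping trivial.
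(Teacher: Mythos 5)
Your proof is correct and follows essentially the same route as the paper's: both reduce to $\sum_{N\in\mathcal M_n}\Lambda(N)^2=\sum_{d\mid n}d^2\pi(d)$, invoke the Prime Polynomial Theorem in the form $d\pi(d)=q^d+O(q^{d/2})$, isolate the $d=n$ term, and bound the remaining divisors (all $\leq n/2$) by a geometric series. Your observation that the argument actually yields the sharper $O(nq^{n/2})$ is also correct; the paper's $n^2$ arises merely from inserting $\pi(n)=q^n/n+O(q^{n/2})$ into $n^2\pi(n)$ rather than using $n\pi(n)=q^n+O(q^{n/2})$ directly.
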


\begin{proof}
We start with the Explicit Formula \eqref{Explicit formula2}
\begin{equation}
  \sum_{d\mid m} d\pi(d) = q^m
\end{equation}
and hence
\begin{equation}
   m\pi(m)\leq  q^m\;.
\end{equation}
Now
\begin{equation}
  q^n=\sum_{d\mid n} d\pi(d) = nq^n + \sum_{\substack{d\mid n\\ d<n}} d\pi(d)
\end{equation}
and hence
\begin{equation}\label{PNT with remainder}
  \pi(n) = \frac {q^n}{n} + O(q^{n/2})\;.
\end{equation}

Likewise
\begin{equation}\label{Eq for Lambdasquared}
   \sum_{N\in \mathcal M_n} \Lambda(N)^2 =\sum_{d\mid n} d^2\pi(d) = n^2\pi(n)
   + \sum_{\substack{d\mid n\\ d<n}} d^2\pi(d)
\end{equation}
with remainder term bounded by
\begin{equation}
  \sum_{\substack{d\mid n\\ d<n}} d^2\pi(d)\leq \sum_{d\leq n/2}
  d^2\pi(d)\leq \sum_{1\leq d\leq n/2} nq^{d/2}
\leq n q\frac{q^{n/2}-1}{q-1}<2nq^{n/2}\;.
\end{equation}
Inserting \eqref{PNT with remainder} into \eqref{Eq for
Lambdasquared} gives the claim.
\end{proof}

\subsection{Dirichlet characters}
For a polynomial $Q(x)\in\fq[T]$  of positive degree, we denote by
$\Phi(Q)$ the order of the group $\left( \fq[T]/(Q) \right)^\times$
of invertible residues modulo $Q$.
A Dirichlet character modulo $Q$ is a homomorphism
$$\chi:\left( \fq[T]/(Q) \right)^\times \to \C^\times$$
that is, after extending $\chi$ to vanish on polynomials which are
not coprime to $Q$, we require $\chi(fg) = \chi(f)\chi(g)$ for all
$f,g\in \fq[T]$, $\chi(1)=1$ and $\chi(f+hQ) = \chi(f)$ for all
$f,h\in \fq[T]$. The number of Dirichlet characters modulo $Q$ is
$\Phi(Q)$.

The orthogonality relations for Dirichlet characters are
\begin{equation}\label{orthogonality relation 1}
  \frac 1{\Phi(Q)} \sum_{\chi \bmod Q} \bar \chi(A) \chi(N)
  = \begin{cases} 1,& N=A\bmod Q \\ 0,& \mbox{ otherwise} \end{cases}
\end{equation}
where the sum is over all Dirichlet characters mod $Q$ and $A$ is
coprime to $Q$, and
\begin{equation}\label{OR2}
  \frac 1{\Phi(Q)}\sum_{A\bmod Q} \chi_1(A)\bar\chi_2(A) =
  \begin{cases}
    1,& \chi_1=\chi_2\\0,&\mbox{ otherwise.}
  \end{cases}
\end{equation}

A Dirichlet character  $\chi$ is ``even''
if $\chi(cF)=\chi(F)$ for $0\neq c\in \fq$. This is in analogy to
the number field case, where a Dirichlet character is called "even"
if $\chi(-1)=+1$, and "odd" if $\chi(-1)=-1$. The number
$\Phi^{ev}(Q)$ of even characters modulo $Q$ is
\begin{equation}
\Phi^{ev}(Q)=\frac 1{q-1} \Phi(Q)\;.
\end{equation}

We require the following orthogonality relations  for even Dirichlet
characters
\begin{lemma}\label{special orthogonality}
Let $\chi_1,\chi_2$ be Dirichlet characters modulo $T^m$, $m>1$.
Suppose $\overline{\chi_1}\chi_2$ is even. Then
\begin{equation}
\frac{1}{ q^{m-1}} \sum_{\substack{B \mod T^m\\
B(0)=1}}\overline{\chi_1}(B)\chi_2(B) = \delta_{\chi_1,\chi_2}\;.
\end{equation}
\end{lemma}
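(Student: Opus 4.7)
The plan is to recognize the sum as a standard orthogonality relation on the subgroup
\[
H := \{B \bmod T^m : B(0) = 1\} \;\subset\; G := (\fq[T]/T^m)^\times.
\]
Note $|H| = q^{m-1}$ and $|G| = \Phi(T^m) = (q-1)q^{m-1}$. The key structural fact is that $G$ splits as a direct product
\[
G \;\cong\; \fq^\times \times H,
\]
where $\fq^\times$ is embedded as the subgroup of nonzero constants. Indeed, any $B \in G$ factors uniquely as $B = B(0) \cdot (B/B(0))$ with $B(0) \in \fq^\times$ and $B/B(0) \in H$.

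Under this decomposition, a Dirichlet character $\chi$ of $G$ is \emph{even} (in the sense defined above, $\chi(cF) = \chi(F)$ for $c \in \fq^\times$) precisely when its restriction to $\fq^\times$ is trivial; equivalently, even characters of $G$ are in bijection with characters of $H$. So the hypothesis that $\eta := \overline{\chi_1}\chi_2$ is even says exactly that $\eta$ is trivial on the $\fq^\times$ factor.

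Now apply standard orthogonality of characters on the finite abelian group $H$ to the restriction $\eta|_H$:
\[
\frac{1}{q^{m-1}} \sum_{B \in H} \eta(B) \;=\; \frac{1}{|H|}\sum_{B \in H} \eta(B) \;=\; \begin{cases} 1, & \eta|_H = \mathbf{1}_H, \\ 0, & \text{otherwise.} \end{cases}
\]
Using the product decomposition $G = \fq^\times \times H$, the character $\eta$ is trivial on all of $G$ if and only if it is trivial on both factors. Evenness gives triviality on $\fq^\times$, so $\eta|_H = \mathbf{1}_H$ is equivalent to $\eta = \mathbf{1}_G$, i.e.\ $\chi_1 = \chi_2$. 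This gives the claimed $\delta_{\chi_1,\chi_2}$.

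There is no real obstacle here: the content is just the observation that the subgroup $H$ splits off a direct $\fq^\times$ factor, after which the claim is immediate from orthogonality on $H$. The only thing to be careful about is to match the evenness condition with triviality on the $\fq^\times$ factor, so that agreement of $\chi_1$ and $\chi_2$ on $H$ upgrades to agreement on all of $G$.
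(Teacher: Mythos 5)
Your proof is correct and rests on the same structural observation as the paper's: the group $(\fq[T]/T^m)^\times$ splits as $\fq^\times \times H$ with $H = \{B : B(0)=1\}$, and evenness of $\overline{\chi_1}\chi_2$ is precisely triviality on the $\fq^\times$ factor. The difference is only one of direction: the paper starts from the orthogonality relation over the full group $(\fq[T]/T^m)^\times$ and uses evenness to collapse the $c$-sum (writing $B = cB_1$), landing on the sum over $H$ with a factor $(q-1)$; you instead apply orthogonality directly on the subgroup $H$ and then use the product decomposition to argue that triviality of $\eta|_H$ together with evenness forces $\eta = \mathbf{1}$, hence $\chi_1 = \chi_2$. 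Both give the same result; your phrasing makes the group-theoretic content a bit more explicit, while the paper's reduction is marginally more computational. Either is fine.
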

\begin{proof}
We start with the standard orthogonality relation
\begin{equation}\label{standard orthogonality}
\frac{1}{ \Phi(T^m)} \sum_{ B \mod T^m
}\overline{\chi_1}(B)\chi_2(B) = \delta_{\chi_1,\chi_2}\;.
\end{equation}
The only nonzero contributions in the sum are those  $B$ with
$B(0)\neq 0$ (equivalently coprime to $T^m$). We can write each such
$B$ uniquely as $B = cB_1$, with $B_1(0)=1$. Since
$\overline{\chi_1}\chi_2$ is even, we have
\begin{equation}
\overline{\chi_1}\chi_2(cB_1) =  \overline{\chi_1}\chi_2(B_1)
\end{equation}
and hence
\begin{equation}
 \sum_{ B \mod T^m }\overline{\chi_1}(B)\chi_2(B) =(q-1)\sum_{\substack{B \mod T^m\\
B(0)=1}}\overline{\chi_1}(B)\chi_2(B)\;.
\end{equation}
Comparing with \eqref{standard orthogonality} and using
$\Phi(T^m)=(q-1)q^{m-1}$ gives the required result.
\end{proof}

\subsection{Primitive characters}
A character is {\em primitive} if there is no proper divisor $Q'\mid
Q$ so that $\chi(F)=1$ whenever $F$ is coprime to $Q$ and $F=1\mod
Q'$. Denoting by $\Phi_{prim}(Q)$ the number of primitive characters
modulo $Q$, we clearly have $\Phi(Q) =\sum_{D\mid Q} \Phi_{prim}(D)$
and hence by M\"obius inversion,
\begin{equation}
 \Phi_{prim}(Q) = \sum_{D\mid Q} \mu(D) \Phi(\frac QD)
\end{equation}
the sum over all monic polynomials dividing $Q$. Therefore
\begin{equation}
\left| \frac{\Phi_{prim}(Q)}{\Phi(Q)}-1 \right| \leq \frac{2^{\deg
Q}}{q} \;.
\end{equation}
Hence as $q\to \infty$, almost all characters are primitive in the
sense that
\begin{equation}
\frac{\Phi_{prim}(Q)}{\Phi(Q)} = 1 +O(\frac 1q)\;,
\end{equation}
the implied constant depending only on $\deg Q$.

Likewise, the number $\Phi_{prim}^{ev}(Q)$ of primitive even
characters is given by
\begin{equation}
\Phi_{prim}^{ev}(Q)=\sum_{D\mid Q} \mu(D) \Phi^{ev}(\frac QD) =
\frac 1{q-1} \sum_{D\mid Q} \mu(D) \Phi(\frac QD) \;.
\end{equation}
For instance, for $Q(T) = T^m$, $m\geq 2$, we find
\begin{equation}
\Phi_{prim}^{ev}(T^m) = q^{m-2}(q-1)\;.
\end{equation}
The number $\Phi^{prim}_{odd}(Q)$ of {\em odd} primitive characters
is then
\begin{equation}
\Phi_{prim}^{odd}(Q) = \Phi_{prim}(Q)-\Phi_{prim}^{ev}(Q) = (1-\frac
1{q-1}) \Phi_{prim}(Q)
\end{equation}
and hence we find that as $q\to \infty$ with $\deg Q$ fixed, almost
all characters are primitive and odd:
\begin{equation}
\frac{\Phi_{prim}^{odd}(Q)}{\Phi(Q)} = 1 +O(\frac 1q)\;,
\end{equation}
the implied constant depending only on $\deg Q$.

\subsection{L-functions}

The L-function $\mathcal L(u,\chi)$ attached to $\chi$ is defined as
\begin{equation}\label{Def of L}
\mathcal L(u,\chi) = \prod_{P\nmid Q} (1-\chi(P)u^{\deg P})^{-1}
\end{equation}
where the product is over all monic irreducible polynomials in
$\fq[T]$. The product is absolutely convergent for $|u|<1/q$. If
$\chi=\chi_0$ is the trivial character modulo $q$, then
\begin{equation}
\mathcal L(u,\chi_0) = Z(u) \prod_{P\mid Q} (1-u^{\deg P})\;.
\end{equation}

The basic fact about $\mathcal L(u,\chi)$ is that if $Q\in \fq[T]$
is a polynomial of degree $\deg Q\geq 2$, and $\chi\neq \chi_0$  a
nontrivial character mod $Q$, then the L-function
$\mathcal L(u,\chi)$ is  a polynomial in $u$ of degree 
$\deg Q-1$.

 Moreover, if $\chi$ is an ``even'' character
, that is
$\chi(cF)=\chi(F)$ for $0\neq c\in \fq$, then 
there is a "trivial" zero at $u=1$: $\mathcal L(1,\chi)=0$ and hence
\begin{equation}
\mathcal L(u,\chi) = (1-u)P(u,\chi)
\end{equation}
where $P(u,\chi)$ is a polynomial of degree  $\deg Q-2$.

We may factor $\mathcal L(u,\chi)$  in terms of the inverse roots
\begin{equation}
\mathcal L(u,\chi) =\prod_{j=1}^{\deg Q-1}(1-\alpha_j(\chi)u) \;.
\end{equation}
The Riemann Hypothesis, proved by Andre Weil (1948) in general,
is that for each (nonzero) inverse root, either $\alpha_j(\chi)=1$
or
\begin{equation}\label{eqRHWeil}
|\alpha_j(\chi)| = q^{1/2} \;.
\end{equation}


We define
\begin{equation}
\Psi(n,\chi) :=\sum_{\deg f=n} \Lambda(f)\chi(f)
\end{equation}
the sum over monic polynomials of degree $n$. Taking logarithmic
derivative of the L-function gives a formula for $\Psi(n,\chi)$ in
terms of the inverse roots $\alpha_j(\chi)$: If $\chi\neq \chi_0$ is
nontrivial then
\begin{equation}
\Psi(n,\chi) = -\sum_{j=1}^{\deg Q-1} \alpha_j(\chi)^n \;.
\end{equation}
The Riemann Hypothesis \eqref{eqRHWeil} gives for $n>0$
\begin{equation} \label{RHWeil}
| \Psi(n,\chi)|\leq (\deg Q-1)q^{n/2}, \quad \chi\neq \chi_0 \;.
\end{equation}

\subsection{The unitarized Frobenius matrix}
We may state the results in cleaner form if we assume that $\chi$ is
a {\em primitive} character modulo $Q$.

We also define
\begin{equation}
\lambda_\chi:=
\begin{cases}1,&\chi\mbox{ ``even''} \\0,&  \mbox{ otherwise.}\end{cases}
\end{equation}
Then for $Q\in \fq[T]$  a polynomial of   degree $\geq 2$, and
$\chi$ a primitive Dirichlet character modulo $Q$,
$$L^*(u,\chi) := (1-\lambda_\chi u)^{-1}L(u,\chi)$$
is a polynomial of degree 
\begin{equation}\label{def of matrix size N}
N=\deg Q-1-\lambda_\chi
\end{equation}
so that $L^*(u,\chi)=\prod_{j=1}^{N} (1-\alpha_j(\chi)u)$ and
\begin{equation}\label{eq:RHprim}
|\alpha_j|=\sqrt{q}, \qquad \forall j=1,\dots,N\;.
\end{equation}

For a primitive character modulo $Q$, we write the inverse roots as
$\alpha_j=q^{1/2}e^{i\theta_j}$ and the completed L-function
$L^*(u,\chi)$ as
\begin{equation}\label{frobenius}
  L^*(u,\chi) = \det(I-uq^{1/2}\Theta_\chi ), \quad \Theta_\chi =
  \mbox{diag}(e^{i\theta_1},\dots,e^{i\theta_N} )\;.
\end{equation}
The unitary matrix $\Theta_\chi$ (or rather, the conjugacy class of
unitary matrices) is called the unitarized Frobenius matrix of
$\chi$.

Taking the logarithmic derivative of \eqref{frobenius} we get an
Explicit Formula for primitive characters:
\begin{equation}\label{explicit formula for L}
  \Psi(n,\chi) =
-q^{n/2}\tr  \Theta_\chi^n - \lambda_\chi\;.
\end{equation}



\section{Prime polynomials in short intervals}\label{secGM}

In this section we prove Theorem~\ref{thm:GMff}, the analogue of the
Goldston-Montgomery result \eqref{MS result}.

\subsection{An involution} For $0\neq f\in \fq[T]$ we define
\begin{equation}
  f^*(T):=T^{\deg f} f(\frac 1T)
\end{equation}
or if  $f(T) = f_0+f_1T+\dots +f_n T^n$, $n=\deg f$ (so that
$f_n\neq 0$), then $f^*$ is the ``reversed'' polynomial
\begin{equation}
  f^*(T) = f_0T^n+f_1T^{n-1}+\dots +f_n\;.
\end{equation}
We also set $0^*=0$.

Note that  $f^*(0)\neq 0$ and $f(0)\neq 0$ if and only if $\deg f^*
= \deg f$. Moreover restricted to polynomials which do not vanish at
$0$, equivalently are co-prime to $T$, then $*$ is an involution:
\begin{equation}
f^{**} = f, \quad  f(0)\neq 0\;.
\end{equation}
We  also have multiplicativity:
\begin{equation}
  (fg)^* = f^* g^*\;.
\end{equation}

\begin{lemma}
For $f\in \mathcal P_n$ with $f(0)\neq 0$, we have $\Lambda(f^*) =
\Lambda(f)$.
\end{lemma}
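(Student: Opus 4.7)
The plan is to trace through the definition of $\Lambda$ and check that the involution $*$ preserves the structure ``unit times a power of a monic irreducible'' along with the degree of that irreducible factor. The inputs are exactly the three properties of $*$ recorded just above the lemma: multiplicativity $(fg)^*=f^*g^*$; involutivity $f^{**}=f$ on polynomials coprime to $T$; and degree preservation $\deg f^*=\deg f$ when $f(0)\neq 0$. Note also that if $f(0)\neq 0$ then no monic irreducible factor of $f$ can equal $T$, which will be important.

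The core step I would isolate is the following: for a monic irreducible $P\neq T$, the polynomial $P^*$ is a nonzero scalar multiple of a monic irreducible of the same degree. Degree preservation gives $\deg P^*=\deg P$, and $P^*(0)\neq 0$ because $P$ is monic. For irreducibility, I would argue by contradiction: a factorization $P^*=AB$ with $\deg A,\deg B\geq 1$ forces $A(0),B(0)\neq 0$ (since $P^*(0)\neq 0$), so applying $*$ and using multiplicativity with involutivity gives $P=A^*B^*$ as a product of two factors of positive degree, contradicting the irreducibility of $P$. Writing $P^*=aQ$ with $a\in\fq^\times$ and $Q$ monic irreducible of degree $\deg P$ then makes the statement precise.

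With this in hand the lemma falls out in two cases. If $f=cP^k$ with $c\in\fq^\times$ and $P$ monic irreducible, then $P\neq T$ since $f(0)\neq 0$, so the previous step gives $P^*=aQ$, and multiplicativity yields $f^*=ca^kQ^k$, whence $\Lambda(f^*)=\deg Q=\deg P=\Lambda(f)$. If instead $f$ is not of the form (unit)$\cdot$(prime power), then $f$ has two distinct monic irreducible factors; since $*$ is a multiplicative bijection on polynomials coprime to $T$ that sends distinct monic irreducibles to scalar multiples of distinct monic irreducibles, $f^*$ is also not of that form, and both sides equal $0$.

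The only real subtlety is bookkeeping: $P^*$ is generally not monic, so one has to extract its leading coefficient as a unit before identifying the corresponding monic irreducible $Q$. Apart from that minor care, the proof is a direct unwinding of the properties of $*$ already established, so I do not anticipate any serious obstacle.
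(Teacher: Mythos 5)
Your proposal is correct and rests on exactly the same key fact as the paper's proof: that $*$ preserves irreducibility on polynomials coprime to $T$, proved by combining multiplicativity with involutivity. The paper states this equivalence and leaves the remaining bookkeeping (extracting the leading unit, tracking prime powers, and the vacuous case where $\Lambda=0$) implicit, whereas you spell it out; that extra care is harmless and the underlying argument is the same.
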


\begin{proof}

For polynomials which do not vanish at $0$, i.e. are co-prime to
$T$, $P$ is irreducible if and only if $P^*$ is irreducible. This is
because if $P=AB$ with $A,B$ of positive degree then $P^*=(AB)^* =
A^* B^*$ and if $P(0)\neq 0$ then the same holds for $A,B$ and then
$\deg A^*=\deg A>0$, $\deg B^*=\deg B>0$ so $P$ is reducible;
applying $^*$ again and using that it is an involution (since
$P(0)\neq 0$) gives the reverse implication.
\end{proof}

\subsection{A fundamental relation}
We can now express the number of primes in our short intervals in
terms of the number of primes in a suitable arithmetic progression.
Define
\begin{equation}
 \tilde \Psi(n;Q,A) = \sum_{\substack{f\in \mathcal P_n\\ f=A\bmod Q}}
  \Lambda(f) \;.
\end{equation}
the sum over all polynomials of degree $n$, not necessarily monic.

\begin{lemma}
For $B\in \mathcal P_{n-h-1}$,
\begin{equation}\label{fund relation}
  \nu(T^{h+1}B;h) = \tilde \Psi(n;T^{n-h},B^*) \;.
\end{equation}
\end{lemma}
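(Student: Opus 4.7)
The plan is to exhibit $f \mapsto f^*$ as a bijection between the two indexing sets, after which the identity of $\Lambda$-values provided by the preceding lemma completes the argument. Intuitively, membership of $f$ in the interval $I(T^{h+1}B;h)$ fixes the top $n-h$ coefficients of $f$ (namely, those of $T^{h+1}B$); reversing coefficients turns this into a congruence condition modulo $T^{n-h}$ on $f^*$, with prescribed residue $B^*$.

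Concretely, I would write any $f \in I(T^{h+1}B;h)$ as $f = T^{h+1}B + g$ with $g \in \mathcal P_{\leq h}$. Since $\deg(T^{h+1}B)=n$ and $\deg g \leq h$ there is no top-coefficient cancellation, so $f \in \mathcal P_n$; when also $f(0)\neq 0$, the involution gives $f^* \in \mathcal P_n$. A direct bookkeeping of coefficients, using that the coefficient of $T^i$ in $f$ is a coefficient of $B$ for $i \geq h+1$ and a coefficient of $g$ for $i \leq h$, yields
\begin{equation*}
f^*(T) = B^*(T) + T^{n-h} R(T), \qquad \deg R \leq h,
\end{equation*}
so $f^* \equiv B^* \pmod{T^{n-h}}$. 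Conversely, for $F \in \mathcal P_n$ with $F \equiv B^* \pmod{T^{n-h}}$, the inequality $\deg B^* \leq n-h-1 < n-h$ forces $F(0) = B^*(0)$, which is the leading coefficient of $B$ (since $\deg B = n-h-1$) and hence nonzero. Thus $F^{**} = F$, and reversing the coefficient calculation shows $F^* \in I(T^{h+1}B;h)$ with $F^*(0)\neq 0$ (its top coefficient, nonzero because $F \in \mathcal P_n$).

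This establishes the required bijection; applying $\Lambda(f^*) = \Lambda(f)$ termwise gives
\begin{equation*}
\nu(T^{h+1}B;h) = \sum_{\substack{f \in I(T^{h+1}B;h)\\ f(0)\neq 0}}\Lambda(f) = \sum_{\substack{F \in \mathcal P_n\\ F \equiv B^* \bmod T^{n-h}}}\Lambda(F) = \tilde \Psi(n;T^{n-h},B^*).
\end{equation*}
The only real subtlety is matching the nonvanishing conditions: the restriction $f(0)\neq 0$ on the left corresponds automatically to $B^*(0) \neq 0$ on the right, both consequences of $\deg B = n-h-1$. I don't foresee any serious obstacle; the proof is essentially a coefficient-by-coefficient verification once the involution has been set up, together with careful tracking of degrees.
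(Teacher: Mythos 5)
Your proof is correct and follows essentially the same route as the paper: apply the reversal involution to $f=T^{h+1}B+g$ to convert the interval condition into the congruence $f^*\equiv B^*\bmod T^{n-h}$, check the bijection and the nonvanishing conditions, and invoke $\Lambda(f^*)=\Lambda(f)$. Your write-up is a bit more explicit about the converse direction and the degree bookkeeping than the paper's, but the idea is identical.
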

\begin{proof}
Let  $B\in \mathcal P_{n-h-1}$. We have $f= T^{h+1}B+g\in
I(T^{h+1}B;h)$, $g\in \mathcal P_{\leq h}$ if and only if $f^* = B^*
+T^{n-h}g^* $, and thus we find
\begin{equation}
  f\in I(T^{h+1}B;h) \Leftrightarrow f^* \equiv B^*  \mod T^{n-h}
  \;.
\end{equation}

As $f$ runs over $I(T^{h+1}B;h)$ with the proviso that $f(0)\neq 0$,
$f^*$ runs over all polynomials of degree exactly $n$ satisfying
$f^*\equiv B^* \mod T^{n-h}$, and for these
$\Lambda(f)=\Lambda(f^*)$.
\end{proof}


\subsection{Averaging}

We want to compute the mean value and variance of $\nu(A,h)$. To
perform the average over $A$, note that every monic polynomial $f\in
\mathcal M_n$ can be written uniquely as
\begin{equation}
  f=T^{h+1}B+g, \quad B\in \mathcal M_{n-(h+1)},\quad g\in  \mathcal P_{\leq
  h} \;.
\end{equation}
We therefore can decompose $\mathcal M_n$ as the disjoint union of
``intervals'' $I(T^{h+1}B;h)$ parameterized by $B\in \mathcal
M_{n-(h+1)}$:
\begin{equation}
  \mathcal M_n = \coprod_{B\in \mathcal M_{n-(h+1)}} I(T^{h+1}B ;h)
  \;.
\end{equation}

To compute averages $\nu$ on short intervals, it suffices, by the
foregoing, to take $A=T^{h+1}B$ and to average over all $B \in
\mathcal M_{n-(h+1)}$.

The map $*$ gives a bijection
\begin{equation}
  \begin{split}
  *: \mathcal M_{n-(h+1)} & \to
  \{  B^* \in \mathcal P_{\leq (n-h-1)}: B^*(0) =1 \}\\
     B &\mapsto  B^*
  \end{split}
\end{equation}
with polynomials of degree $\leq n-(h+1)$ with constant term $1$.
Thus as $B$ ranges over $\mathcal M_{n-(h+1)}$, $B^*$ ranges over
$(\fq[T]/(T^{n-h}))^\times$, all invertible residue class mod
$T^{n-h}$ so that $B^*(0)=1$.

Thus  the mean value is
\begin{equation}
\begin{split}
  \ave{\nu(\bullet;h)} &= \frac 1{\#\mathcal M_{n-h-1}} \sum_{B\in
    \mathcal M_{n-h-1}}
\nu(T^{h+1}B;,h) \\
&=\frac 1{q^{n-h-1}} \sum_{\substack{B^*\bmod T^{n-h}\\B^*(0)=1}}
\tilde \Psi (n;T^{n-h},B^*)
\end{split}
\end{equation}
and the variance is
\begin{equation}
\begin{split}
  \Var(\nu(\bullet;h)) &=\frac 1{\#\mathcal M_{n-h-1}} \sum_{B\in
    \mathcal M_{n-h-1}}
\left| \nu(T^{h+1}B;,h) - \ave{\nu} \right|^2 
\\
&=\frac 1{q^{n-h-1}} \sum_{\substack{B^*\bmod T^{n-h}\\B^*(0)=1}}
\left| \tilde \Psi (n;T^{n-h},B^*)- \ave{\nu} \right|^2 \;.
\end{split}
\end{equation}

\subsection{The mean value}
The computation of the mean value $\ave{\nu(\bullet;h)} = \frac
1{q^n}\sum_{A\in \mathcal M_n} \nu(A;h)$ is a simple consequence of
the Prime Polynomial Theorem. The result is
\begin{lemma}\label{lem:mean value}
Let $0<h<n$. The mean value of $\nu(A,;h)$ is
\begin{equation}
   \ave{\nu(\bullet;h)} =q^{h+1}(1-\frac{1}{q^n})\;.
\end{equation}
\end{lemma}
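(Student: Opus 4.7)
The plan is to swap the order of summation in the definition of the mean and then apply the Prime Polynomial Theorem \eqref{Explicit formula2}.

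First I would write
\begin{equation*}
\sum_{A\in \mathcal M_n} \nu(A;h) = \sum_{A\in \mathcal M_n} \sum_{\substack{f\in I(A;h)\\ f(0)\neq 0}} \Lambda(f) = \sum_{\substack{f\in \mathcal M_n\\ f(0)\neq 0}} \Lambda(f) \cdot \#\{A\in \mathcal M_n : f\in I(A;h)\},
\end{equation*}
where I have used that for $h<n$ the condition $f\in I(A;h)$, i.e.\ $\|f-A\|\leq q^h$, together with $A$ monic of degree $n$ forces $f$ to be monic of degree $n$ as well (so only monic $f$ appear).

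The inner count is easy: $f\in I(A;h)$ iff $A=f+r$ with $r\in \mathcal P_{\leq h}$, and since $h<n$ and $f$ is monic of degree $n$, every such $A$ is automatically monic of degree $n$. Hence there are exactly $\#\mathcal P_{\leq h}=q^{h+1}$ such $A$ for each fixed $f$.

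It remains to evaluate $\sum_{f\in\mathcal M_n,\,f(0)\neq 0}\Lambda(f)$. By \eqref{Explicit formula2} this sum equals $q^n$ minus the contribution of monic $f$ of degree $n$ with $f(0)=0$ and $\Lambda(f)\neq 0$. Such an $f$ must be a prime power $P^k$ with $P$ monic irreducible and $P(0)=0$, which forces $P=T$ and $f=T^n$, contributing $\Lambda(T^n)=\deg T=1$. Thus
\begin{equation*}
\sum_{\substack{f\in \mathcal M_n\\ f(0)\neq 0}} \Lambda(f) = q^n-1,
\end{equation*}
and dividing by $q^n$ yields $\ave{\nu(\bullet;h)} = q^{h+1}(1-1/q^n)$, as claimed. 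There is no real obstacle here; the only point requiring a moment of care is the identification of the unique ``bad'' polynomial $T^n$ that must be removed when restricting to $f(0)\neq 0$.
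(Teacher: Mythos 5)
Your proof is correct and is essentially the first of the two derivations the paper gives: swapping the order of summation and counting $q^{h+1}$ values of $A$ for each $f$ is the same double counting the paper performs via the disjoint partition $\mathcal M_n = \coprod_{B\in \mathcal M_{n-h-1}} I(T^{h+1}B;h)$, and the identification of $T^n$ as the unique monic degree-$n$ prime power vanishing at $0$ matches the $\Lambda(T^n)=1$ correction in the paper. The paper additionally re-derives the same value a second time via the relation \eqref{fund relation} as a consistency check on that relation, which you reasonably omit since it is not needed for the lemma.
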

\begin{proof}
We do the computation in two different ways as a  check of the
all-important relation \eqref{fund relation}. By using the
definition of $\nu$, we get
\begin{equation}
  \begin{split}
 \ave{\nu(\bullet;h)}&= \frac 1{\#\mathcal M_{n-h-1}}
\sum_{B\in   \mathcal M_{n-h-1}}  \sum_{\substack{f\in
    I(T^{h+1}B;h)\\ f(0)\neq 0}} \Lambda(f) \\
&=   \frac 1{\#\mathcal M_{n-h-1}}\left( \sum_{f\in \mathcal M_n}
\Lambda(f) - \Lambda(T^n) \right) \;.
  \end{split}
\end{equation}
Note that
\begin{equation}
  \#\mathcal M_{n-h-1} = q^{n-h-1} = \frac {\Phi(T^{n-h})}{q-1}\;.
\end{equation}

Using \eqref{fund relation},  the mean value of $\nu(\bullet;h)$ is
\begin{equation}
  \begin{split}
  \ave{\nu(\bullet;h)} &= \frac 1{\Phi(T^{n-h})} \sum_{\substack{ B^*
      \bmod T^{n-h} \\ B^*(0)=1}} \tilde \Psi(n;T^{n-h},B^*)\\
&= \frac 1{\Phi(T^{n-h})} \sum_{\substack{\deg
    f^*=n\\ f^*(0)=1}} \Lambda(f^*) \\
&= \frac 1{\Phi(T^{n-h})}  \left( \sum_{\deg f^*=n} \Lambda(f^*) -
\sum_{c\in \fq^*} \Lambda(cT^n)\right)\\
&= \frac 1{q^{n-h-1}}\left( \sum_{f^*\in \mathcal M_n} \Lambda(f^*)
-\Lambda(T^n) \right)\;.
   \end{split}
\end{equation}
Hence
\begin{equation}
  \ave{\nu(\bullet;h)} = \frac1{q^{n-h-1}} \left(q^n-1
  \right) =q^{h+1}(1-\frac{1}{q^n})
\end{equation}
on using the Prime Polynomial Theorem in the form \eqref{Explicit
formula2}.
\end{proof}

\subsection{An alternate expression for $\nu(A;h)$}

Using the standard orthogonality relation \eqref{standard
orthogonality} for Dirichlet characters modulo $T^{n-h}$ gives an
alternate expression for $\tilde \Psi(n;T^{n-h},B^*)$ and hence for
$\nu(T^{h+1}B;h)$:
\begin{equation}
   \tilde
\Psi(n;T^{n-h},B^*) = \frac 1{\Phi(T^{n-h})} \sum_{\chi \bmod
  T^{n-h}} \overline{\chi}(B^*) \sum_{\deg f^*=n}
\Lambda(f^*)\chi(f^*)\;.
\end{equation}

Only {\em even} characters give a non-zero term, because
$\Lambda(cf) = \Lambda(f)$ for $c\in \fq^\times$, and each even
character contributes a term
\begin{equation}
  \overline{\chi}(B^*) \frac{q-1}{\Phi(T^{n-h})} \sum_{\substack{\deg f=n\\
\mbox{monic}}} \Lambda(f)\chi(f) =  \overline{\chi}(B^*) \frac
1{q^{n-h-1}} \Psi(n,\chi)
\end{equation}
where
\begin{equation}
  \Psi(n,\chi) =  \sum_{\substack{\deg f=n\\ \mbox{ monic}}}
  \Lambda(f)\chi(f)\;.
\end{equation}
Note that the number of even characters mod $T^{n-h}$ is exactly
$\frac 1{q-1}\Phi(T^{n-h}) = q^{n-h-1}$.

The trivial character $\chi_0$ contributes the term
\begin{equation}
  \frac{(q-1)(q^n-1)}{\Phi(T^{n-h})} = q^{h+1}(1-\frac{1}{q^{n}}) =
  \ave{\nu}\;.
\end{equation}
Thus we find that the difference between $\nu(T^{h+1}B;h)$ and its
mean $\ave{\nu}$ is
\begin{equation}\label{form of nu}
\nu(T^{h+1}B;h)-\ave{\nu} =
\frac 1{q^{n-h-1}}\sum_{\substack{\chi \neq \chi_0 \bmod T^{n-h}\\
\mbox{ even}}} \overline{\chi}(B^*)  \Psi(n,\chi)\;.
\end{equation}

%
%
%
%
%
%
%
%
\subsection{The variance}

Our result here is that
\begin{theorem}
Fix $n>0$ and let $0<h<n$. As $q\to \infty$, the variance of $\nu$
is given by
\begin{equation}
  \Var(\nu) = q^{h+1} \cdot \left( \frac 1{q^{n-h-1}} \sum^*_{\chi}|\tr
  \Theta_\chi^n|^2 + O(\frac{n-h}{q^{n/2}} + \frac{n^2}{q})  \right)
\end{equation}
where the sum is over primitive even characters modulo $T^{n-h}$,
the implied constant depending only on $n$.
\end{theorem}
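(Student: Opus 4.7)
The plan is to derive the stated formula directly from the Fourier-type expansion \eqref{form of nu} by a second-moment computation that exploits the orthogonality relation of Lemma~\ref{special orthogonality}. Substituting \eqref{form of nu} into the variance expression and expanding the modulus squared produces
\[
\Var(\nu)=\frac{1}{q^{n-h-1}}\cdot\frac{1}{q^{2(n-h-1)}}\sum_{\chi_1,\chi_2}\Psi(n,\chi_1)\overline{\Psi(n,\chi_2)}\sum_{\substack{B^*\bmod T^{n-h}\\B^*(0)=1}}\overline{\chi_1}(B^*)\chi_2(B^*),
\]
where the $\chi_i$ run over nontrivial even characters modulo $T^{n-h}$.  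Since both characters are even, the product $\overline{\chi_1}\chi_2$ is even, so Lemma~\ref{special orthogonality} applies and collapses the $B^*$-sum to $q^{n-h-1}\delta_{\chi_1,\chi_2}$.  This gives the clean identity
\[
\Var(\nu)=\frac{1}{q^{2(n-h-1)}}\sum_{\substack{\chi\neq\chi_0\bmod T^{n-h}\\\chi\text{ even}}}|\Psi(n,\chi)|^2.
\]

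Next, I would decompose the sum by conductor.  Every nontrivial even character modulo $T^{n-h}$ is induced from a primitive even character modulo $T^k$ for some $2\le k\le n-h$ (the case $k=1$ is void because the only character of $\fq^\times$ fixed by multiplication by $\fq^\times$ is trivial).  Crucially, the only prime dividing any $T^k$ is $T$ itself, so the Euler products defining $L(u,\chi)$ and $L(u,\chi')$ coincide, forcing $\Psi(n,\chi)=\Psi(n,\chi')$.  Splitting off the top conductor $k=n-h$ and invoking the explicit formula \eqref{explicit formula for L} with $\lambda_\chi=1$ yields $\Psi(n,\chi)=-q^{n/2}\tr\Theta_\chi^n-1$, and expanding
\[
|\Psi(n,\chi)|^2=q^n|\tr\Theta_\chi^n|^2+2q^{n/2}\Re(\tr\Theta_\chi^n)+1
\]
produces the stated main term after dividing by $q^{2(n-h-1)}$, since $q^n/q^{2(n-h-1)}=q^{h+1}/q^{n-h-1}$.

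The remaining work is to bound the two error contributions within the claimed $q^{h+1}\bigl(O((n-h)/q^{n/2})+O(n^2/q)\bigr)$.  The cross term at conductor $T^{n-h}$ is controlled by Cauchy--Schwarz together with the trivial bound $|\tr\Theta_\chi^n|\le n-h-2$ and the count $\Phi^{ev}_{prim}(T^{n-h})=q^{n-h-2}(q-1)$; this yields a bound of order $(n-h)q^{h+1-n/2}$, which is exactly the first error term.  For the lower-conductor contributions $2\le k\le n-h-1$, the Weil bound \eqref{RHWeil} gives $|\Psi(n,\chi')|\le(k-1)q^{n/2}$, and $\Phi^{ev}_{prim}(T^k)\le q^{k-1}$; summing the resulting geometric series in $k$ is dominated by the largest $k=n-h-1$ and produces $O((n-h)^2 q^h)$, absorbed in $q^{h+1}\cdot n^2/q$.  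The main obstacle is the bookkeeping: the main term $\sum^{*}_\chi|\tr\Theta_\chi^n|^2$ itself is only bounded trivially by $(n-h)^2q^{n-h-1}$, so the cross-term estimate must be done without Cauchy--Schwarz-ing against the main term, and the lower-conductor bound must exploit the fact that $\Phi^{ev}_{prim}(T^k)$ shrinks geometrically as $k$ decreases so that no extra factor of $n-h$ enters.
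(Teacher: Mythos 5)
Your proof is correct and follows essentially the same route as the paper: the same substitution of the Fourier expansion \eqref{form of nu}, the same application of Lemma~\ref{special orthogonality} to collapse to $\Var(\nu)=q^{-2(n-h-1)}\sum_{\chi\neq\chi_0,\ \mathrm{even}}|\Psi(n,\chi)|^2$, the same use of the explicit formula on the primitive piece, and the same trivial bound $|\tr\Theta_\chi^n|\le n-h-2$ for the cross term. The one cosmetic difference is that you handle the imprimitive characters via an explicit conductor decomposition (using that only $T$ divides $T^{n-h}$, so $\Psi(n,\chi)=\Psi(n,\chi')$), whereas the paper simply counts the $O(q^{n-h-2})$ imprimitive even characters and applies the Weil bound $|\Psi(n,\chi)|=O(nq^{n/2})$ to each; both yield the same $O(n^2q^h)$ contribution.
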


 \begin{proof}
By \eqref{form of nu} we have
\begin{equation}
  \Var(\nu) = \frac 1{q^{n-h-1}}
\sum_{\substack{B^* \bmod T^{n-h} \\B^*(0) =1 }} \frac
1{q^{2(n-h-1)} }\left| \sum_{\substack{\chi\neq \chi_0\\ \mbox{even}
}}  \overline{\chi}(B^*) \Psi(n,\chi)\right|^2 \;.
\end{equation}
Expanding the sum over characters, and interchanging the order of
summation to use  the orthogonality relation of Lemma~\ref{special
orthogonality} gives
\begin{equation}
  \Var(\nu) = \frac 1{q^{2(n-h-1)} }\sum_{\substack{\chi\neq
      \chi_0\\\mbox{even}}} |\Psi(n,\chi)|^2     \;.
\end{equation}

There are altogether $\varphi(T^{n-h})/(q-1) = q^{n-h-1}$ even
characters modulo $T^{n-h}$, of which $O(q^{n-h-2})$ are
non-primitive. We bound the contribution of the nontrivial
non-primitive characters by $\Psi(n,\chi) = O(nq^{n/2})$ via the
Riemann Hypothesis.  Thus the non-primitive  characters contribute a
total of $O( n^2 q^h )$ to $\Var(\nu)$.

Using the Explicit Formula \eqref{explicit formula for L} for
primitive even characters and the Riemann Hypothesis  gives
\begin{equation}
|\Psi(n,\chi) |^2 =q^n |\tr \Theta_\chi^n|^2 +O((n-h) q^{n/2})\;.
\end{equation}
Therefore
\begin{equation}
  \Var(\nu) = q^{h+1} \cdot \left( \frac 1{q^{n-h-1}} \sum^*_{\chi}|\tr
  \Theta_\chi^n|^2 + O(\frac{n-h}{q^{n/2}} + \frac{n^2}{q})  \right)
\end{equation}
where the sum is over primitive even characters modulo $T^{n-h}$,
whose number is $q^{n-h-1}(1-\frac 1q)$.
\end{proof}

\subsection{Proof of Theorem~\ref{thm:GMff}}

Thus we found that for $h<n-3$, the variance of $\nu$ is given by
\begin{equation}
  \frac 1{q^{h+1}}\Var(\nu) = (1-\frac 1q) \ave{|\tr
\Theta_\chi^n|^2} + O(\frac{ n-h}{q^{n/2}} + \frac{n^2}{q})
\end{equation}
with $\ave{|\tr \Theta_\chi^n|^2}$ being the mean value of $|\tr
\Theta_\chi^n|^2$ over the set of all  primitive even Dirichlet
characters modulo $T^{n-h}$.
Thus as $q\to \infty$, $\Var(\nu)/q^{h+1}$ is asymptotically equal
to the "form factor" $\ave{|\tr \Theta_\chi^n|^2}$.

 To proceed further, we need to invoke a recent result of N.~Katz
 \cite{KatzKR2}:
\begin{theorem}\cite[Theorem 1.2]{KatzKR2} \label{thm:KatzKR2}
Fix\footnote{If the characteristic of $\fq$ is different than $2$ or
  $5$ then the result also holds for $m=2$.} $m\geq 3$. The unitarized
Frobenii $\Theta_\chi$ for the family of even primitive characters
mod $T^{m+1}$ become equidistributed in the projective unitary group
$PU(m-1)$ of size $m-1$, as $q\to \infty$.
\end{theorem}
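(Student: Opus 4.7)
The plan is to apply Deligne's equidistribution theorem, so the first step is to realize the family $\{\Theta_\chi\}$, as $\chi$ ranges over even primitive characters modulo $T^{m+1}$, as the family of unitarized Frobenius conjugacy classes coming from a lisse $\ell$-adic sheaf $\mathcal{F}$ on some parameter scheme $X/\mathbb{F}_q$. Concretely, even characters modulo $T^{m+1}$ are determined by their values on $\{B : B(0)=1\}$, which form an affine space of dimension $m$ over $\mathbb{F}_q$; primitivity cuts out an open subset $X \subset \mathbb{A}^m$. Using a standard Fourier-theoretic construction (Kloosterman sheaves or Katz's sheaf-theoretic $L$-functions), one builds $\mathcal{F}$ on $X$ of rank $m-1$ matching \eqref{def of matrix size N}, pure of weight zero, whose Frobenius conjugacy class at the point representing $\chi$ is exactly $\Theta_\chi$.

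Once the geometric setup is in place, Deligne's equidistribution theorem says that as $q \to \infty$, the classes $\Theta_\chi$ become equidistributed in the space of conjugacy classes of a maximal compact subgroup $K$ of the arithmetic monodromy group of $\mathcal{F}$, provided the geometric monodromy group $G_{\text{geom}}$ is as large as possible. The theorem then reduces to showing $K = PU(m-1)$, equivalently $G_{\text{geom}} = PGL_{m-1}$ acting through the standard projective representation with the appropriate integral structure.

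The main obstacle, and the substance of Katz's work, is this monodromy computation. The standard outline is: (i) prove geometric irreducibility of $\mathcal{F}$ (using that $L^*(u,\chi)$ is irreducible for primitive $\chi$ and has all inverse roots of absolute value $\sqrt{q}$); (ii) rule out self-duality or identify the correct autoduality structure, so that $G_{\text{geom}}$ lies a priori only in $PGL_{m-1}$ rather than in a classical subgroup; (iii) invoke one of Katz's classification tools --- Larsen's alternative, or direct exclusion of imprimitive and tensor-decomposable cases via the computation of moments $\frac{1}{q^{m-1}}\sum_{\chi}^{*} |\tr \Theta_\chi^{n}|^{2k}$ for small $k,n$ --- to force $G_{\text{geom}}$ to fill out $PGL_{m-1}$. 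The low-rank restrictions ($m\geq 3$, and the characteristic hypothesis at $m=2$) come from exceptional coincidences at $PGL_1$ and small characteristics where the monodromy could collapse to a proper subgroup.

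Having identified $G_{\text{geom}}$, equidistribution follows formally: by Peter--Weyl and the Weyl integration formula it suffices to check, for each nontrivial irreducible representation $\rho$ of $PU(m-1)$, that $\frac{1}{|X(\mathbb{F}_q)|}\sum_{\chi} \tr \rho(\Theta_\chi) \to 0$, which is a direct consequence of Deligne's Riemann hypothesis applied to $\rho_* \mathcal{F}$ (this sheaf has no trivial constituent because $\rho$ is nontrivial and $G_{\text{geom}}$ is maximal). I expect the monodromy computation in step~(iii) to be by far the hardest part; the sheaf-theoretic realization is routine, and the equidistribution and Peter--Weyl inputs can be imported as black boxes once big monodromy is in hand.
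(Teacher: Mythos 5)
The paper does not prove this statement: it is quoted verbatim as \cite[Theorem 1.2]{KatzKR2} and used as a black box, so there is no internal argument to compare against. What you have written is a plausible high-level roadmap of how a Katz-style equidistribution theorem is generally proved, and the broad strokes (Deligne's equidistribution theorem plus a big-monodromy computation for a suitable lisse sheaf) do match the real structure of Katz's argument.

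However, you dismiss as ``routine'' precisely the step that is the central novelty of \cite{KatzKR2}. You write that even characters mod $T^{m+1}$ ``form an affine space of dimension $m$'' with ``primitivity cutting out an open subset $X\subset\mathbb{A}^m$.'' This conflates the residues $B$ with $B(0)=1$ (which do form an affine space) with the characters $\chi$ themselves, which live in the Pontryagin dual of the unipotent group $(1+T\,\mathbb{F}_q[T]/T^{m+1})$. That dual group has no obvious structure of $\mathbb{F}_q$-scheme, and realizing it as one --- so that $\chi\mapsto\Theta_\chi$ can be interpreted as the Frobenius of a lisse sheaf on a variety over $\mathbb{F}_q$ --- is exactly what Katz's Witt-vector machinery accomplishes; it is the reason his paper is titled ``Witt vectors and a question of Keating and Rudnick.'' Until that parametrization is in hand there is no sheaf to which Deligne's theorem can be applied, so this is a genuine gap in the proposal, not a deferrable technicality. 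The monodromy computation in your step (iii) is also nontrivial (and is where the restriction $m\geq 3$ and the characteristic hypothesis at $m=2$ actually arise), but at least there you correctly flag it as the hard part. If you intend to actually carry out the proof rather than sketch it, you should start by reproducing Katz's Witt-vector parametrization, since the rest of the outline depends on it.
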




Applying Theorem~\ref{thm:KatzKR2} gives
\begin{equation}
\lim_{q\to \infty}  \frac 1{q^{n-h-1}(1-\frac 1q)} \sum^*_{\chi}|\tr
\Theta_\chi^n|^2  = \int_{PU(n-h-2)} |\tr U^n|^2 dU\;.
\end{equation}
We may pass from the projective unitary group $PU(n-h-2)$ to the
unitary group because the function $|\tr U^n|^2$ being averaged is
invariant under scalar multiplication. As is well known (see e.g.
\cite{DS}), for $n>0$
\begin{equation}
\int_{U(N)} |\tr U^n|^2 dU = \min(n,N)\;.
\end{equation}
Therefore we find
\begin{equation}
  \Var(\nu) \sim q^{h+1}(n-h-2),\quad q\to \infty\;.
\end{equation}
This concludes the proof of Theorem~\ref{thm:GMff}.

\section{Prime polynomials in arithmetic progressions}\label{secFGH}

In this section we prove Theorem~\ref{main thm}, giving the function
field analogue of the conjectures of Hooley \eqref{Hooley conj} and
Friedlander-Goldston \eqref{FG conj}.

\subsection{The  range $n<\deg Q$}
\label{sec:uninteresting}

We prove the result in the  range $n<\deg Q$ by elementary
arguments:

\begin{proposition}
  For $0<n<\deg Q$, we have
  \begin{equation}
    G(n;Q) = nq^n -\frac{q^{2n}}{\Phi(Q)} + O(n^2 q^{n/2}) +O((\deg Q)^2)
  \end{equation}
where the implied constant is independent of $q$, $n$ and $Q$.
\end{proposition}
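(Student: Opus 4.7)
My plan exploits the trivialization of $\Psi(n;Q,A)$ when $n<\deg Q$. Since any two distinct monic polynomials of degree $n$ differ by a nonzero polynomial of degree $<\deg Q$, no residue class modulo $Q$ can contain more than one such polynomial. Consequently, if we take $A$ as its unique representative of degree $<\deg Q$, then $\Psi(n;Q,A)=\Lambda(A)$ when $A$ is itself monic of degree exactly $n$ (which then forces $(A,Q)=1$ in the sums of interest), and $\Psi(n;Q,A)=0$ otherwise.

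With this identification I expand the square in the definition of $G(n;Q)$:
\begin{equation*}
G(n;Q) = \sum_{\substack{A\in\mathcal{M}_n \\ (A,Q)=1}} \Lambda(A)^2 \;-\; \frac{2q^n}{\Phi(Q)}\sum_{\substack{A\in\mathcal{M}_n \\ (A,Q)=1}}\Lambda(A) \;+\; \frac{q^{2n}}{\Phi(Q)}.
\end{equation*}
The two unrestricted sums over $\mathcal{M}_n$ are evaluated by the Prime Polynomial Theorem \eqref{Explicit formula2} and Lemma~\ref{variants of PNT}. The pieces removed by the constraint $(A,Q)=1$ come only from prime powers $A=P^k$ with $P\mid Q$ and $k\deg P = n$; since $\sum_{P\mid Q}\deg P \leq \deg Q$, these corrections are $O(\deg Q)$ for the linear sum and $O((\deg Q)^2)$ for the quadratic sum (using $(\deg P)^2 \leq \deg P\cdot \deg Q$).

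Substituting yields
\begin{equation*}
G(n;Q) = nq^n - \frac{q^{2n}}{\Phi(Q)} + O(n^2 q^{n/2}) + O((\deg Q)^2) + O\!\left(\frac{q^n \deg Q}{\Phi(Q)}\right),
\end{equation*}
and the remaining error is absorbed into $O((\deg Q)^2)$ by a routine bound such as $q^{\deg Q}/\Phi(Q) = O(\log \deg Q)$ (a function-field Mertens estimate) combined with $n\leq \deg Q-1$. No step presents a real obstacle; the entire content is the observation in the first paragraph that collapses $\Psi(n;Q,A)$ to at most one summand, after which the result is an elementary consequence of the two moment estimates for $\Lambda$ already in hand.
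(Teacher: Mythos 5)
Your proof is correct and follows essentially the same route as the paper: collapse $\Psi(n;Q,A)$ to at most one term using $n<\deg Q$, expand the square, feed in the Prime Polynomial Theorem and Lemma~\ref{variants of PNT}, and control the coprimality corrections by $\sum_{P\mid Q}\deg P\leq\deg Q$. The only cosmetic difference is in bounding the leftover $O\bigl(\frac{q^n\deg Q}{\Phi(Q)}\bigr)$ term: the paper uses the crude uniform estimate $|Q|/\Phi(Q)=\prod_{P\mid Q}(1-|P|^{-1})^{-1}\ll\deg Q$, whereas you invoke the sharper Mertens-type $O(\log\deg Q)$; either suffices, but since $n\leq\deg Q-1$ already buys a factor $1/q$, the weaker bound is all that is needed and is easier to justify uniformly.
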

\begin{proof}
  Assume as we may that $\deg A<\deg Q$. If $n<\deg Q$ then the only
  solution to the congruence $N=A\bmod Q$, with $\deg N=n<\deg Q$ is
  $A$ (if $\deg A=n$) or else there is no solution. Therefore if
  $n<\deg Q$ then
\begin{equation}
    \Psi(n;Q,A) =
    \begin{cases}
      \Lambda(A),& A \mbox{ is monic and }\deg A=n \\0,& \mbox{ otherwise.}
    \end{cases}
\end{equation}
Thus
 \begin{equation*}
   \begin{split}
     G(n;Q) &= \sum_{\gcd(A,Q)=1} \left| \frac{q^n}{\Phi(Q)}-
     \begin{cases}
       \Lambda(A),& A \mbox{ is monic and}\deg A=n \\0,& \mbox{ otherwise}
     \end{cases} \right|^2 \\
 &= \sum_{\substack{\deg A=n\\A\mbox{ monic}\\ \gcd(A,Q)=1}}  \Lambda(A)^2 -
 2\frac{q^n}{\Phi(Q)} \sum_{\substack{\deg A=n\\A\mbox{ monic}\\ \gcd(A,Q)=1}}
 \Lambda(A) + \frac{q^{2n}}{\Phi(Q)}\;.
   \end{split}
 \end{equation*}

By the Prime Polynomial Theorem \eqref{Explicit formula2},
\begin{equation}
  \sum_{\substack{\deg A=n\\A\mbox{ monic}\\ \gcd(A,Q)=1}} \Lambda(A)
  = q^n-\sum_{\substack{P\mid Q \mbox{ prime}\\ \deg P\mid n}} \deg P
  = q^n +O(\deg Q)\;.
\end{equation}
According to Lemma~\ref{variants of PNT},
\begin{equation}
  \begin{split}
   \sum_{\substack{\deg A=n\\A\mbox{ monic}\\ \gcd(A,Q)=1}}
   \Lambda(A)^2 & =
\sum_{\deg A=n}\Lambda(A)^2 - \sum_{\substack{P\mid Q\\\deg P\mid
n}} (\deg P)^2
\\&=nq^n +O(n^2q^{n/2}) + O( (\deg Q)^2)
  \end{split}
\end{equation}
 and so we find
\begin{equation}
  G(n;Q) = nq^n - \frac{q^{2n}}{\Phi(Q)} + O(n^2q^{n/2}) +O((\deg Q)^2) + O(\frac{q^n}{\Phi(Q)} \deg Q) \;.
\end{equation}
Since for $n<\deg Q$,
\begin{equation}
\frac{q^n}{\Phi(Q)} \leq \frac 1q \prod_{\substack{P\mid Q\\
\mbox{prime}}} (1-\frac 1{|P|})^{-1} \leq \frac 1q
\prod_{\substack{\deg P\leq \deg Q\\ \mbox{prime}}} (1-\frac
1{|P|})^{-1}  \ll \frac{\deg Q}{q}
\end{equation}
we find
\begin{equation}
  G(n;Q) = nq^n - \frac{q^{2n}}{\Phi(Q)} + O(n^2q^{n/2}) +O((\deg Q)^2)
\end{equation}
as claimed.
\end{proof}

\subsection{The range $n\geq \deg Q$}

To deal with the  range $n\geq \deg Q$ we relate the problem to an
equidistribution statement for the unitarized Frobenii of primitive
odd characters.   It transpires that   $G(n;Q)$ is related to the
mean value of the modulus squared of the trace of the Frobenius
matrices associated to the family of Dirichlet L-functions for
characters modulo $Q$:
\begin{theorem}
Fix $n$  and let $Q\in \fq[T]$ have degree $\deg Q\geq 2$. Then
\begin{equation}\label{fund identity}
  \frac {G(n;Q)}{q^n}=
\ave{  |\tr \Theta_\chi^n|^2} (1+ \frac 1q) +
  O(\frac{(\deg Q)^2}q)
\end{equation}
where $\ave{}$ denotes the average over all odd primitive characters
modulo $Q$.
\end{theorem}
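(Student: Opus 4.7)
The plan is Parseval on the $A$-side combined with the explicit formula on the $\chi$-side. Using \eqref{orthogonality relation 1}, I expand $\Psi(n;Q,A) = \Phi(Q)^{-1}\sum_\chi\bar\chi(A)\Psi(n,\chi)$; the trivial character contributes $\Psi(n,\chi_0)/\Phi(Q) = q^n/\Phi(Q) + O(\deg Q/\Phi(Q))$ by the Prime Polynomial Theorem (the $O$-term accounting for primes $P\mid Q$ with $\deg P\mid n$). Subtracting $q^n/\Phi(Q)$, squaring, and summing over $A$ coprime to $Q$, the orthogonality relation \eqref{OR2} kills the cross terms and yields
$$
G(n;Q) = \frac{1}{\Phi(Q)}\sum_{\chi\neq\chi_0}|\Psi(n,\chi)|^2 + O\!\left(\frac{(\deg Q)^2}{\Phi(Q)}\right).
$$

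I then split the character sum into non-primitive and primitive. For non-primitives, the Weil bound \eqref{RHWeil} gives $|\Psi(n,\chi)|^2\le(\deg Q-1)^2 q^n$, while the M\"obius identity from Section~3 bounds their count by $O(\Phi(Q)\,2^{\deg Q}/q)$; after division by $q^n$ their total contribution is $O((\deg Q)^2/q)$ (with implied constant allowed to depend on $\deg Q$, consistent with the stated error). For primitive $\chi$, the explicit formula \eqref{explicit formula for L} yields $|\Psi(n,\chi)|^2 = q^n|\tr\Theta_\chi^n|^2 + 2q^{n/2}\lambda_\chi \mathrm{Re}(\tr\Theta_\chi^n) + \lambda_\chi^2$, and the cross term and the $\lambda_\chi^2$-term each contribute at most $O(\deg Q/q^{n/2+1})$ and $O(1/(q^{n+1}))$ respectively to $G(n;Q)/q^n$. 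I next discard even primitive characters: there are at most $\Phi(Q)/(q-1)$ of them and each summand is bounded by $(\deg Q-2)^2$, so their contribution to $G(n;Q)/q^n$ is again $O((\deg Q)^2/q)$. Combining everything,
$$
\frac{G(n;Q)}{q^n} = \frac{1}{\Phi(Q)}\sum_{\chi\text{ odd prim}}|\tr\Theta_\chi^n|^2 + O\!\left(\frac{(\deg Q)^2}{q}\right).
$$

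The last move is to normalize the sum as an average over odd primitive characters. Using $\Phi_{prim}^{odd}(Q)=(1-1/(q-1))\Phi_{prim}(Q)$ together with $\Phi_{prim}(Q)/\Phi(Q)=1+O(2^{\deg Q}/q)$ from Section~3, a short computation shows $\Phi_{prim}^{odd}(Q)/\Phi(Q) = 1 + O(1/q)$ (with constants depending on $\deg Q$). Since $\ave{|\tr\Theta_\chi^n|^2}\le(\deg Q)^2$, this absorbs the specific coefficient $(1+1/q)$ in the claim within the $O((\deg Q)^2/q)$ error: both $\ave{|\tr\Theta_\chi^n|^2}\cdot(1+1/q)$ and $\ave{|\tr\Theta_\chi^n|^2}\cdot\Phi_{prim}^{odd}(Q)/\Phi(Q)$ differ from the bare average by at most that much. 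The main obstacle is the bookkeeping of several $O(1/q)$-sized corrections (non-primitivity, parity, the trivial-character leftover, and the explicit-formula cross term) and checking they all fit under the single error bound $O((\deg Q)^2/q)$; each is individually routine, and the skeleton is the familiar Parseval--Weil--explicit-formula chain.
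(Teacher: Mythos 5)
Your proposal is correct and follows essentially the same route as the paper: expand $\Psi(n;Q,A)$ via character orthogonality, isolate the trivial-character contribution, apply Parseval over $A$, then use the Weil bound to discard the even and imprimitive characters (whose count is $O(\Phi(Q)/q)$ with constants depending on $\deg Q$) and the explicit formula to convert the surviving odd primitive contribution into $q^n|\tr\Theta_\chi^n|^2$. The only cosmetic difference is the order of operations: the paper discards even and imprimitive characters before invoking the explicit formula, so it never needs your $\lambda_\chi$ cross-term analysis, whereas you apply the explicit formula first and then discard; both are equally valid, and your observation that the explicit $(1+1/q)$ factor is anyway absorbed by the $O((\deg Q)^2/q)$ error is accurate.
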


\begin{proof}
The orthogonality relation \eqref{orthogonality relation 1}  gives
\begin{equation}
\begin{split}
  \Psi(n;Q,A) &= \frac 1{\Phi(Q)} \sum_{\chi \bmod Q} \bar \chi(A)
  \sum_{\deg N=n} \chi(N)\Lambda(N)\\
&= \frac 1{\Phi(Q)} \sum_{\chi \bmod Q} \bar \chi(A) \Psi(n,\chi)\;.
\end{split}
\end{equation}
The trivial character $\chi_0$ gives a contribution of
\begin{equation}
\frac 1{\Phi(Q)}   \sum_{\substack{\deg N=n\\ \gcd(N,Q)=1}}
\Lambda(N) = \frac {q^n}{\Phi(Q)}  - \frac
1{\Phi(Q)}\sum_{\substack{P\mid
    Q\\ \deg P\mid n}} \deg P\;.
\end{equation}
Hence
\begin{equation}
  \Psi(n;Q,A)-\frac {q^n}{\Phi(Q)} =  - \frac 1{\Phi(Q)}\sum_{\substack{P\mid
    Q\\ \deg P\mid n}} \deg P + \frac 1{\Phi(Q)}\sum_{\chi\neq \chi_0}
  \chi(A) \Psi(n,\chi)\;.
\end{equation}

We square out and average over all $A\bmod Q$ coprime with $Q$.
Using the orthogonality relation \eqref{OR2} gives
\begin{equation}
  G(n;Q) = \frac 1{\Phi(Q)} \sum_{\chi\neq \chi_0} |\Psi(n,\chi)|^2 +
   \frac 1{\Phi(Q)}(\sum_{\substack{P\mid Q\\ \deg P\mid n}} \deg P
   )^2\;.
\end{equation}

For nontrivial characters which are either even or imprimitive, we
use the Riemann Hypothesis \eqref{RHWeil} to bound $|\Psi(n,\chi)|^2
\leq q^n(\deg Q-1)^2$. Therefore we find
\begin{multline}
   G(n;Q) = \frac 1{\Phi(Q)} \sum_{\chi \mbox { primitive, odd}}
     |\Psi(n,\chi)|^2 \\
+ O( q^n(\deg Q)^2 \frac {\#\{\chi \mbox{ either
         even or imprimitive\}}}{\Phi(Q)})\;.
\end{multline}

The number of even characters is $\Phi(Q)/(q-1)$, and the number of
imprimitive characters is $O(\Phi(Q)/q)$. Hence the remainder term
above is bounded by  $O( q^{n-1}(\deg Q)^2)$.

For each primitive odd character, the ``explicit formula''
\eqref{explicit formula for L} says
\begin{equation}
  \Psi(n,\chi) = -q^{n/2}\tr \Theta_\chi^n 
\end{equation}
and therefore
\begin{equation}
  G(n;Q) = q^n\frac 1{\Phi(Q)} \sum_{ \chi \mbox{ odd primitive} } \left|\tr
  \Theta_\chi^n \right|^2 +O(q^{n-1} (\deg Q)^2)\;.
\end{equation}
Replacing $\Phi(Q) $ by the number of odd primitive characters times
$1+O(\frac 1q)$ gives \eqref{fund identity}.
\end{proof}


We now use another recent equidistribution  result of  Katz
\cite{KatzKR}:
\begin{theorem}[Katz \cite{KatzKR}]\label{thm:KatzKR}
Fix $m\geq 2$. Suppose we are given a sequence of finite fields
$\fq$ and squarefree polynomials $Q(T)\in \fq[T]$ of degree $m$. As
$q\to \infty$, the conjugacy classes $\Theta_{\chi}$ with $\chi$
running over all primitive odd characters modulo $Q$, are uniformly
distributed in the unitary group $U(m-1)$.
\end{theorem}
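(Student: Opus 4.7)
The plan is to follow the Deligne--Katz framework for equidistribution, which reduces the statement to a computation of geometric monodromy for an appropriate $\ell$-adic local system over a parameter space.

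First, I would parameterize the family. Let $\mathcal S_m$ denote the moduli space whose $\fq$-points are pairs $(Q,\chi)$ with $Q\in \fq[T]$ monic squarefree of degree $m$ and $\chi$ a primitive odd Dirichlet character modulo $Q$; this is an \'etale cover of the open subscheme of $\mathbb A^m$ parameterizing squarefree $Q$ of degree $m$. Next, I would construct a lisse $\ell$-adic sheaf $\mathcal F$ on $\mathcal S_m$, of rank $m-1$ and pure of weight zero after normalization, whose local Frobenius conjugacy class at $(Q,\chi)$ coincides with $\Theta_\chi$. Concretely, $\mathcal F$ is built from Kummer sheaves attached to $\chi$ on $\mathbb A^1$ minus the zeros of $Q$, arranged so that its Frobenius trace formula recovers the inverse roots of $L^*(u,\chi)$; Weil's Riemann Hypothesis \eqref{eqRHWeil} then places these Frobenius classes in $U(m-1)$ after normalization.

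The crucial step, and the main obstacle, is to show that the arithmetic and geometric monodromy groups $G_{\mathrm{arith}}\supseteq G_{\mathrm{geom}}$ of $\mathcal F$ both coincide with the full unitary group in $m-1$ variables (equivalently, with $GL_{m-1}$ as an algebraic group, purity then forcing the compact form). One would first establish that the standard representation is geometrically irreducible, using the fact that $L^*(u,\chi)$ admits no nontrivial decomposition across the family. Next, one excludes classical proper subgroups (symplectic, orthogonal, or tensor-product types) by analyzing the local monodromy at the boundary of $\mathcal S_m$, where $Q$ acquires a repeated root: the local monodromy along the discriminant locus is a pseudoreflection, and a supply of sufficiently many such pseudoreflections rules out the classical subgroups besides the full linear group. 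Finally, exceptional or imprimitive possibilities are excluded by rank and dimension counts. Crucially, the oddness hypothesis removes the trivial zero at $u=1$ so that the rank stays at $m-1$, and also rules out a self-duality that would force $G_{\mathrm{geom}}$ into a symplectic or orthogonal subgroup.

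Once $G_{\mathrm{geom}}=G_{\mathrm{arith}}=GL_{m-1}$ is established, Deligne's equidistribution theorem yields directly that as $q\to\infty$ the Frobenius conjugacy classes $\Theta_\chi$, with $(Q,\chi)$ ranging over $\fq$-points of $\mathcal S_m$, become equidistributed with respect to Haar measure on the maximal compact subgroup $U(m-1)$. The technical heart is therefore the monodromy computation at the discriminant locus, combined with Katz's classification of irreducible subgroups of $GL_N$ generated by pseudoreflections; this is precisely the content carried out in \cite{KatzKR}.
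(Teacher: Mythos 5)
The paper does not prove this theorem; it is invoked as a black-box result of Katz (reference \cite{KatzKR}), so there is no ``paper's own proof'' to compare against. Your sketch correctly identifies the broad shape of the argument---a Deligne equidistribution theorem applied after showing the relevant monodromy group is as large as possible---but the specific construction you describe is not the one Katz uses, and as written it would not yield the theorem as stated.

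The statement requires equidistribution of $\Theta_\chi$ over the primitive odd characters modulo a \emph{single} fixed $Q$ for each $q$, as $q\to\infty$. You parameterize pairs $(Q,\chi)$ over the open locus of squarefree degree-$m$ polynomials and apply Deligne's theorem to that total space; this would give equidistribution of Frobenius classes averaged over all pairs, which is a strictly weaker statement and does not recover the fiberwise conclusion for a particular sequence of $Q$'s without further uniformity input. Katz's actual approach in \cite{KatzKR} holds $Q$ fixed and works with the family of characters of a fixed torus, using the Tannakian/Mellin-transform machinery from his monograph \emph{Convolution and Equidistribution} \cite{Katz}: the L-function $L(u,\chi)$ is realized as a Mellin transform over the group $(\fq[T]/Q)^\times$ (modulo scalars, in the odd case), the relevant ``monodromy group'' is the Tannakian group of an object in a category of perverse sheaves under middle convolution, and the identification of that group with $GL_{m-1}$ replaces your local-monodromy/pseudoreflection analysis at the discriminant locus (which is the wrong boundary here, since $Q$ never degenerates in the family one actually uses). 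Your observation about oddness eliminating the trivial zero and potential self-duality is on target as a heuristic for why the answer is the full unitary group rather than a symplectic or orthogonal one, but the mechanism by which Katz rules out smaller groups is different from what you propose.
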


Using Theorem~\ref{thm:KatzKR} we get for $n>0$,
\begin{equation}
\lim_{q\to \infty} \ave{ \left|\tr \Theta_\chi^n\right|^2 }
 =\int_{U(\deg Q-1)} \left|\tr U^n \right|^2 dU
\end{equation}
where $dU$ is the Haar probability measure on the unitary group
$U(N)$. Since \cite{DS}
\begin{equation}
 \int_{U(N)} \left|\tr U^n \right|^2 dU =\min(n,N)\;,
\end{equation}
we find
\begin{equation}
\lim_{q\to \infty} \frac{ G(n;Q) }{q^n} = \min(n,\deg Q-1)
\end{equation}
which is the statement of  Theorem~\ref{main thm}.

\appendix

\section{A calculation based on a Hardy-Littlewood-type conjecture}

In the number-field setting, the problems we have considered here
have previously been explored using the Hardy-Littlewood conjecture
relating to the density of generalized twin primes \cite{FG, MS}.
In this appendix we sketch a heuristic calculation showing how the
corresponding conjecture in the function-field setting may be used
in the same way.  As an example, we focus on estimating $G(n, Q)$.

The twin prime conjecture of Hardy and Littlewood for the rational
function field $\fq[T]$ states that, given a polynomial $0\neq K\in
\fq[T]$, and $n>\deg K$,
\begin{equation}\label{HL conj}
  \sum_{{\rm deg}f=n}\Lambda(f)\Lambda(f+K) \sim \mathfrak S(K)
  q^n
\end{equation}
as $q^n \to \infty$, where the "singular series" $\mathfrak S(K)$ is
given by
\begin{equation}\label{Sdef}
 \mathfrak S(K) = \prod_{P} (1-\frac 1{|P|})^{-2} (1-\frac {\nu_K(P)}{|P|}),
\end{equation}
with the product involving all monic irreducible $P$ and
\begin{equation}
  \nu_K(P) = \#\{A\mod P: A(A+K)=0\bmod P\}=
  \begin{cases}
    1,& P\mid K\\ 2,& P\nmid K.
  \end{cases}
\end{equation}
While for fixed $q$ and $n\to \infty$ the problem is currently
completely open, for fixed $n$ and $q\to \infty$, \eqref{HL conj} is
known to hold \cite{Bender Pollack, BS} for $q$ odd, in the form
\begin{equation}\label{BP thm}
\sum_{{\rm deg}f=n}\Lambda(f)\Lambda(f+K) =  q^n +O_n(q^{n-\frac
12}) \;.
\end{equation}
Note that $\mathfrak S(K) = 1+O_n(\frac 1q)$.

 We want to use \eqref{HL conj} to compute $G(n;Q)$ and to
show that the result is consistent with
\begin{equation}
  G(n;Q) \sim q^n (\deg Q-1), \quad n> \deg Q\;.
\end{equation}
It turns out that this can be done if we ignore the contribution
from the remainder implicit in \eqref{HL conj}. The remainder term
in \eqref{BP thm} is insufficient for our purposes.

Starting with
\begin{equation}
  G(n;Q) = \sum_{\gcd(A,Q)=1} \left| \Psi(n;Q,a)-\frac{q^n}{\Phi(Q)} \right|^2\;,
\end{equation}
we have
\begin{equation}\label{expanding}
  G(n;Q) =  \sum_{\gcd(A,Q)=1}  \Psi(n;Q,A)^2 - 2\frac{q^n}{\Phi(Q)}
  \sum_{\gcd(A,Q)=1}  \Psi(n;Q,A)  + \frac{q^{2n}}{\Phi(Q)}\;.
\end{equation}

The first moment of $\Psi(n;Q,A)$ is
\begin{equation}
  \begin{split}
   \sum_{\gcd(A,Q)=1}  \Psi(n;Q,A) &= \sum_{\substack{\deg
       f=n\\\gcd(f,Q)=1}} \Lambda(f) \\
& = \sum_{\deg f=n} \Lambda(f) - \sum_{\substack{\deg f=n\\\deg
    \gcd(f,Q)>0}} \Lambda(f)\\
& = q^n - \sum_{\substack{\deg P\mid n\\P\mid Q\mbox{ prime}}} \deg
P\;.
  \end{split}
\end{equation}
By Lemma~\ref{variants of PNT} we may safely replace
\begin{equation}
   \sum_{\gcd(A,Q)=1}  \Psi(n;Q,A) = q^n + \mbox{ negligible.}
\end{equation}

For the second moment of $\Psi(n;Q,A)$ we have
\begin{equation}
  \begin{split}
   \sum_{\gcd(A,Q)=1}  \Psi(n;Q,A)^2 & =
\sum_{\substack{\deg f=\deg g =n\\ f\equiv g \mod Q\\\gcd(f,Q)=1}}
\Lambda(f)\Lambda(g) \\
&= \sum_{\substack{\deg f=n\\ \gcd(f,Q)=1}} \Lambda(f)^2 +
\sum_{\substack{\deg f=\deg g
       =n\\ f\equiv g \mod Q \\f\neq g\\ \gcd(f,Q)=1}}
       \Lambda(f)\Lambda(g)\;.
  \end{split}
\end{equation}
Now
\begin{equation}
   \sum_{\substack{\deg f=n\\ \gcd(f,Q)=1}} \Lambda(f)^2 = nq^n
     +O(n^2q^{n/2}) -\sum_{\substack{P\mid Q\\\deg P\mid n}} (\deg
     P)^2\;.
\end{equation}
For the sum over $f\neq g$, we write the condition $f=g\mod Q$ as
$g=f+JQ$, $J\neq 0$, $\deg J<n-\deg Q$ (the number of such $J$ of
degree $j$ is $(q-1)q^{j}$) and then
\begin{equation}\label{congruent sum}
\sum_{\substack{\deg f=\deg g =n\\ f\equiv g \mod Q \\f\neq g\\
\gcd(f,Q)=1}} \Lambda(f)\Lambda(g) = \sum_{\substack{\deg J<n-\deg
Q\\ J\neq 0}} \psi_2(n;JQ)
\end{equation}
where for $K\neq 0$, $\deg K<n$,
\begin{equation}
   \psi_2(n;K):=\sum_{\substack{\deg f=n\\ f {\rm monic}}} \Lambda(f)\Lambda(f+K)\;.
\end{equation}

Clearly we can split the right hand side of \eqref {congruent sum}
as follows
\begin{equation}
\sum_{\substack{\deg f=\deg g =n\\ f\equiv g \mod Q \\f\neq g\\
\gcd(f,Q)=1}} \Lambda(f)\Lambda(g) =\sum_{j=0}^{n-\deg Q}
\sum_{\substack{\deg J=j\\ J\neq 0}} \psi_2(n;JQ)\;.
\end{equation}
The $J$-sum here is not restricted to monic polynomials.  We can
restrict it to monics, multiplying by $q-1$.  Then inserting
\eqref{HL conj} we have
\begin{equation}\label{congruent sum1}
\sum_{\substack{\deg f=\deg g =n\\ f\equiv g \mod Q \\f\neq g\\
\gcd(f,Q)=1}} \Lambda(f)\Lambda(g) \sim q^n(q-1)\sum_{j=0}^{n-\deg
Q} \sum_{\substack{\deg J=j\\ J\neq 0\\J {\rm monic}}}\mathfrak
S(JQ)
\end{equation}
as $q^n\rightarrow\infty$.

In order to estimate the $J$-sum in \eqref{congruent sum1}, consider
\begin{equation}
\sum_{J {\rm monic}}\frac{\mathfrak S(JQ)}{|J|^s}=\alpha \sum_{J
{\rm monic}}\frac{1}{|J|^s}\prod_{P|JQ}\frac{|P|-1}{|P|-2}
\end{equation}
where the equality follows from inserting \eqref{Sdef} and
\begin{equation}
\alpha=\prod_P\left(1-\frac{1}{(|P|-1)^2}\right)\;.
\end{equation}
Hence
\begin{equation}
\sum_{J {\rm monic}}\frac{\mathfrak
S(JQ)}{|J|^s}=\alpha\prod_{P|Q}\frac{|P|-1}{|P|-2} \sum_{J {\rm
monic}}\frac{1}{|J|^s}\prod_{\substack {P|J \\ P\nmid
Q}}\frac{|P|-1}{|P|-2}\;.
\end{equation}
Since the summand on the right hand side is multiplicative, we may
write this as
\begin{equation}
\sum_{J {\rm monic}}\frac{\mathfrak
S(JQ)}{|J|^s}=\alpha\prod_{P|Q}\frac{|P|-1}{|P|-2} \prod_{P\nmid
Q}\left(1+\frac{1}{|P|^s-1}\frac{|P|-1}{|P|-2}\right)
\prod_{P|Q}\left(1-\frac{1}{|P|^s}\right)^{-1}\;.
\end{equation}
Therefore
\begin{equation}
\sum_{J {\rm monic}}\frac{\mathfrak
S(JQ)}{|J|^s}=\alpha\zeta_A(s)\prod_{P|Q}\frac{|P|-1}{|P|-2}
\prod_{P\nmid Q}\left(1+\frac{1}{|P|^s(|P|-2)}\right)
\end{equation}
with
\begin{equation}
\zeta_A(s)=\prod_P\left(1-\frac{1}{|P|^s}\right)^{-1}.
\end{equation}
Hence
\begin{equation}
\sum_{J {\rm monic}}\frac{\mathfrak
S(JQ)}{|J|^s}=\alpha\zeta_A(s)\prod_{P|Q}\frac{|P|-1}{|P|-2}\left(1+\frac{1}{|P|^s(|P|-2)}\right)^{-1}
\prod_{P}\left(1+\frac{1}{|P|^s(|P|-2)}\right)\;.
\end{equation}
Furthermore
\begin{multline}
\sum_{J {\rm monic}}\frac{\mathfrak S(JQ)}{|J|^s}=\alpha\zeta_A(s)\zeta_A(s+1)\prod_{P|Q}\frac{|P|-1}{|P|-2}\left(1+\frac{1}{|P|^s(|P|-2)}\right)^{-1}\\
\times
\prod_{P}\left(1+\frac{2}{|P|^{s+1}(|P|-2)}-\frac{|P|}{|P|-2}\frac{1}{|P|^{2s+2}}\right)\;.
\end{multline}

It is convenient to re-express these formulae in terms of the
variable $u=1/q^s$.  Thus $|J|=u^{-{\rm deg}J}$, $|P|=u^{-{\rm
deg}P}$, and
\begin{multline}\label{uform}
\sum_{J {\rm monic}}\mathfrak S(JQ)u^{{\rm deg}J}=\alpha Z(u)Z(u/q)\prod_{P|Q}\frac{|P|-1}{|P|-2}\left(1+\frac{u^{{\rm deg}P}}{(|P|-2)}\right)^{-1}\\
\times \prod_{P}\left(1+\frac{2u^{{\rm
deg}P}}{|P|(|P|-2)}-\frac{u^{2{\rm deg}P}}{|P|(|P|-2)}\right)
\end{multline}
with
\begin{equation}
Z(u)=\prod_P(1-u^{{\rm deg}P})^{-1}=\frac{1}{1-qu}\;.
\end{equation}

We can now estimate the $J$-sum in \eqref{congruent sum1} by
denoting
\begin{equation}
F(u)=\sum_{J {\rm monic}}\mathfrak S(JQ)u^{{\rm deg}J}
\end{equation}
and using
\begin{equation}
\sum_{\substack{\deg J=j\\ J\neq 0\\J {\rm monic}}}\mathfrak
S(JQ)=\frac{1}{2\pi{\rm i}}\oint \frac{F(u)}{u^{j+1}}{\rm d}u\;,
\end{equation}
where the contour is a small circle enclosing the origin but no
other singularities of the integrand.  Expanding the contour beyond
the poles of $F(u)$ at $u=1/q$ and $u=1$ (coming from the factors of
$Z(u)$ and $Z(u/q)$ in \eqref{uform}), we find that as
$q\rightarrow\infty$
\begin{equation}\label{H-Lav}
\sum_{\substack{\deg J=j\\ J\neq 0\\J {\rm monic}}}\mathfrak
S(JQ)\sim q^j\frac{|Q|}{\Phi(Q)}-\frac{1}{q-1},
\end{equation}
where we have used
\begin{equation}
\prod_{P|Q}\frac{|P|}{|P|-1}=\frac{|Q|}{\Phi(Q)}.
\end{equation}
Note that the first term in \eqref{H-Lav} coincides after the usual
translation with that in the corresponding expression in the number
field calculation \cite{FG}, but that interestingly the second term
has a different form.

 Finally, substituting \eqref{H-Lav} into
\eqref{congruent sum1} and incorporating the estimates for the other
terms in \eqref{expanding}, we find that
\begin{equation}
G(n;Q)\sim q^n\left({\rm deg}Q-\frac{|Q|}{\Phi(Q)}\right).
\end{equation}

We now observe that as $q \rightarrow \infty$
\begin{equation}
\frac{|Q|}{\Phi(Q)}\rightarrow 1
\end{equation}
and so in this limit, when $n$ is fixed with ${\rm deg} Q \le n+1$,
this calculation matches Theorem~\ref{main thm}.  Furthermore, when
${\rm deg} Q \rightarrow \infty$ with $q$ fixed we have that
\begin{equation}
G(n;Q)\sim q^n {\rm deg}Q
\end{equation}
which is consistent with the Hooley's conjecture~\eqref{Hooley conj}
in the number field case.

\bigskip

\noindent{\bf Acknowledgements:} We thank Nick Katz for several
discussions, and  Julio Andrade  and the referees for their
comments.


\begin{thebibliography}{99}

\bibitem{BS}
L.~Bary-Soroker, {\em Twin prime analog over large finite fields}.
preprint arXiv:1206.3930v1.



\bibitem{Bender Pollack}
A.~Bender and P.~ Pollack, {\em On quantitative analogues of the
Goldbach and twin prime conjectures over $\fq[t]$}, preprint
arXiv:0912.1702v1

\bibitem{DS}
P.~Diaconis and M.~Shahshahani. {\em On the eigenvalues of random
matrices}. Studies in applied probability. J. Appl. Probab. 31A
(1994), 49–62.

\bibitem{FG}
J.~B.~Friedlander and D.~A.~Goldston, {\em Variance of distribution
of primes in residue classes}. Quart. J. Math. Oxford Ser. (2) 47
(1996), no. 187, 313--336.

\bibitem{Gallagher}
 Gallagher, P. X. On the distribution of primes in short intervals.
 Mathematika  23  (1976), no. 1, 4--9.

\bibitem{Goldston-Montgomery}
D.~A.~Goldston, and H.~L.~Montgomery, {\em Pair correlation of zeros
and primes in short intervals}. Analytic number theory and
Diophantine problems (Stillwater, OK, 1984), 183--203, Progr. Math.,
70, Birkh\"auser Boston, Boston, MA, 1987.

\bibitem{GY}
Goldston, D. A.; Yildirim, C. Y. Primes in short segments of
arithmetic progressions.  Canad. J. Math.  50  (1998),  no. 3,
563--580.

\bibitem{HooleyI}
C.~Hooley, {\em On the Barban-Davenport-Halberstam theorem}. I.
Collection of articles dedicated to Helmut Hasse on his
seventy-fifth birthday, III. J. Reine Angew. Math. 274/275 (1975),
206--223.

\bibitem{HooleyII}
C.~Hooley, {\em On the Barban-Davenport-Halberstam theorem}. II. J.
London Math. Soc. (2) 9 (1974/75), 625--636.

\bibitem{HooleyICM}
C.~Hooley, {\em The distribution of sequences in arithmetic
progression}, Proc. ICM Vancouver (1974), 357--364.

\bibitem{HooleyV}
C.~Hooley, {\em On the Barban-Davenport-Halberstam theorem}. V.
Proc. London Math. Soc. (3)  33  (1976), no. 3, 535--548.

\bibitem{Katz}
N~.M.~Katz, {\em Convolution and Equidistribution: Sato-Tate
Theorems for Finite-Field Mellin Transforms}, Annals of Mathematics
Studies, 180. Princeton University Press, Princeton, NJ, 2012.


\bibitem{KatzKR}
N.~M.~Katz, {\em On a Question of Keating and Rudnick about
Primitive Dirichlet Characters with Squarefree Conductor}, Int Math
Res Notices first published online June 4, 2012
doi:10.1093/imrn/rns143

\bibitem{KatzKR2}
N.~M.~Katz, {\em Witt vectors and a question of Keating and
Rudnick}, Int Math Res Notices, first published online June 20, 2012
doi:10.1093/imrn/rns144 .

\bibitem{Montgomery}
H.~L.~Montgomery, {\em  Primes in arithmetic progressions}. Michigan
Math. J.  17  1970 33--39.

\bibitem{MS}
H.~L.~Montgomery and K.~Soundararajan. {\em Primes in short
intervals}. Comm. Math. Phys.  252  (2004),  no. 1-3, 589--617.




\bibitem{Rosen}
M. Rosen, {\em Number theory in function fields}. Graduate Texts in
Mathematics, {\bf 210}. Springer-Verlag, New York, 2002.



\bibitem{Weil}
A.~Weil, {\em Basic number theory}. Third edition. Die Grundlehren
der Mathematischen Wissenschaften, Band 144. Springer-Verlag, New
York-Berlin, 1974

\end{thebibliography}
\end{document}